\newtheorem{theorem}{Theorem}[section]
\newtheorem{lemm}[theorem]{Lemma}
\theoremstyle{definition}
\newtheorem{defi}[theorem]{Definition}
\newtheorem{prop}[theorem]{Proposition}
\newtheorem{ex}[theorem]{Example}
\theoremstyle{remark}
\numberwithin{equation}{section}
\begin{document}

\title{The Euler characteristic of a surface
from its Fourier analysis in one direction.}


\author{Nguyen Viet Dang\\
Laboratoire Paul Painlev\'e (U.M.R. CNRS 8524)\\
UFR de Math\'ematiques\\
Universit\'e de Lille 1,
France\\}


\date{}

\maketitle

\begin{abstract}
In this paper, we prove that 
we can recover the genus of a
closed 
compact surface $S$ in
$\mathbb{R}^3$
from the restriction
to a generic line
of the Fourier transform
of the canonical 
measure 
carried by $S$.
We also show 
that the restriction
on some line 
in Minkowski space
of the solution
of a linear wave equation
whose Cauchy data
comes from 
the canonical 
measure 
carried by $S$,
allows
to recover
the Euler
characteristic of $S$.
\end{abstract}

\section*{Introduction}

Let us start with a surface $S$ 
in $\mathbb{R}^3$. To this surface $S$,
we can always associate
a natural measure
just by 
integrating 
test functions on $S$.
Indeed, 
the surface carried measure $\mu$
is the distribution defined as
\begin{equation}
\mu(\varphi)=\int_S \varphi d\sigma 
\end{equation}
where $d\sigma$ is the canonical area element 
on $S$ induced by the 
Euclidean metric of $\mathbb{R}^3$
(\cite[p.~334]{SteinShakarchi},
\cite[p.~321]{SteinBeijing})
and $\varphi$ a test function.
If we assume
that $S$ is compact,
then we find that
the measure $\mu$
can be viewed as a 
\emph{compactly
supported
distribution}
on $\mathbb{R}^3$,
$\mu$ is thus a tempered distribution 
and therefore it 
has a well defined
Fourier transform denoted by 
$\widehat{\mu}$.

In harmonic analysis,
we are interested in the
analytical properties
of the Fourier transform $\widehat{\mu}$
of measures $\mu$ which are
supported on submanifolds
of some
given vector
space. 
For instance, in 
a very recent paper
\cite{Aizenbud-12},
using
the concept of wave front set
and resolution of singularities, 
Aizenbud and Drinfeld
give a proof
that the Fourier transform
of an algebraic measure
is smooth in some 
open dense set.
More classically, one
would like to study the 
asymptotic behaviour of
$\widehat{\mu}$ for large
momenta. In the simple case
where $\mu$
is the measure carried by a plane $H$ in 
a vector space $V$, 
the Fourier transform $\widehat{\mu}$ is a 
measure carried by the dual plane $H^\perp$
in Fourier space $V^*$ \cite[Thm 7.1.25 p.~173]{HormanderI}. 
Moreover, if the measure 
$\mu$ 
is compactly supported, 
then we know
by Paley--Wiener
that its 
Fourier transform $\widehat{\mu}$
should be a 
\emph{bounded} function 
on
$\mathbb{R}^3$. Therefore
in both cases, for any 
test function $\varphi$,
classical theorems
in distribution 
theory only
yield
that the Fourier
transform
$\widehat{\mu\varphi}$ 
is \textbf{bounded}.

However, if we assume that 
$\mu$ is carried by a \emph{compact} 
hypersurface $S$ of dimension $n$ 
with \emph{non vanishing Gauss curvature},
then a celebrated result of Stein
(see \cite[Theorem 1 p.~322]{SteinBeijing} and also \cite[Thm 7.7.14]{HormanderI})
gives finer decay properties
on $\widehat{\mu}$. Indeed, he proves that
$\vert \widehat{\mu}\vert \leqslant C (1+\vert\xi\vert)^{-\frac{n}{2}}$
for some constant $C$ which depends 
on the volume of $S$ 
and the Gauss curvature.
Stein's result shows 
the 
interplay 
between geometry and analysis, since
a simple assumption on 
the Gauss curvature of
$S$ gives
sharper decay properties on 
$\widehat{\mu}$
than a simple application
of the Paley--Wiener theorem.

In this paper,
we explore
the relationship
between the Fourier transform
$\widehat{\mu}$
of the surface 
carried measure $\mu$ 
and the topology
of the surface itself.
In the same spirit as
in the papers \cite{KDB, Kashiwaraindex}, 
we use microlocal analysis and Morse theory 
in the study of
$\widehat{\mu}$.
Throughout the paper,
the surface $S$ 
will always be assumed
to be smooth and oriented.
We state in an informal way
the main result of this note:
\begin{theorem}\label{mainthm}
Let $S$ be a closed
compact 
surface 
embedded in $\mathbb{R}^3$
and $\mu$ the associated 
surface carried measure.
For a
generic line
$\ell\subset \mathbb{R}^3$,
we can recover
the Euler characteristic 
of the surface $S$
from the restriction 
$\widehat{\mu}|_\ell$.
\end{theorem}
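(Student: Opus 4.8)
\emph{Proof strategy.} The plan is to turn the restriction of $\widehat\mu$ to a line into a one–dimensional Fourier transform of a pushforward measure, to recover that measure, and to read off, from the \emph{types} of its singularities, the Morse data of a linear height function on $S$ — which, by Morse theory, determines $\chi(S)$. Concretely, write a line as $\ell=\{a+t\omega:\ t\in\mathbb R\}$, $\omega\in S^2$, $a\in\mathbb R^3$; genericity of $\ell$ will be used only through genericity of $\omega$. Put $f=f_\omega:=\langle\,\cdot\,,\omega\rangle|_S$ and $g:=f_*\bigl(e^{-i\langle\,\cdot\,,a\rangle}\,d\sigma\bigr)$, the pushforward to $\mathbb R$ of the finite complex measure $e^{-i\langle x,a\rangle}d\sigma(x)$. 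By Fubini,
\[
\widehat\mu(a+t\omega)=\int_S e^{-i\langle x,a\rangle}e^{-itf(x)}\,d\sigma(x)=\widehat g(t),
\]
so $\widehat\mu|_\ell$ is exactly the Fourier transform of $g$. Since for generic $\omega$ the critical set of $f$ is finite, the coarea formula shows that $g$ is a compactly supported $L^1$ function; hence $g$, and in particular its singular support and the nature of each of its singularities, is determined by $\widehat g=\widehat\mu|_\ell$.

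\emph{Genericity and local structure of $g$.} The set of $\omega\in S^2$ for which $f_\omega$ is not Morse on $S$, or is Morse but has two critical points with equal critical value, is Lebesgue–null (a Sard/transversality argument for the Gauss map and for the height differences at pairs of critical points). Assume $\ell$ generic, so $f$ is Morse with pairwise distinct critical values $s_1<\dots<s_N$. Off $\{s_j\}$ the level curves $f^{-1}(s)$ vary smoothly, so $g$ is smooth there. Near a critical value $s_j$, attained at a single critical point $p$, split $g=g^{\mathrm{loc}}+g^{\mathrm{sm}}$, where $g^{\mathrm{sm}}$ comes from $S$ minus a neighbourhood of $p$ and is smooth near $s_j$ (there $f$ is a submersion, with no other critical value near $s_j$). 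In Morse coordinates $(u,v)$ at $p$ one has $f=s_j+\varepsilon_1u^2+\varepsilon_2v^2$ with $\varepsilon_i\in\{\pm1\}$, and $e^{-i\langle\,\cdot\,,a\rangle}\,d\sigma=\Phi(u,v)\,du\,dv$ with $\Phi$ smooth, compactly supported, $\Phi(0)=\rho(p)e^{-i\langle p,a\rangle}\neq0$. A direct coarea computation then gives: if $p$ is an extremum ($\varepsilon_1=\varepsilon_2$), then $g^{\mathrm{loc}}(s)=\mathbf{1}_{\pm(s-s_j)>0}\,h(s-s_j)$ with $h$ smooth and $h(0)=\pi\Phi(0)\neq0$, i.e.\ $g$ has a genuine jump discontinuity at $s_j$; if $p$ is a saddle ($\varepsilon_1=-\varepsilon_2$), then, after the substitution $(u,v)\mapsto(u^2-v^2,uv)$, one obtains $g^{\mathrm{loc}}(s)=-c\,\Phi(0)\log|s-s_j|+O(1)$ with $c>0$, i.e.\ $g$ has a logarithmic singularity at $s_j$. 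These two behaviours are mutually exclusive and are preserved under adding the smooth remainder $g^{\mathrm{sm}}$, so each $s_j$ is unambiguously labelled, from $g$ alone, a ``jump point'' or a ``log point''.

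\emph{Counting and conclusion; main obstacle.} Let $N_{\mathrm{jump}},N_{\log}$ be the numbers of jump points and log points of $g$; by the previous step $N_{\mathrm{jump}}=c_0+c_2$ and $N_{\log}=c_1$, where $c_i$ is the number of index–$i$ critical points of $f$. Since $S$ is a closed surface, Morse theory yields
\[
\chi(S)=c_0-c_1+c_2=N_{\mathrm{jump}}-N_{\log},
\]
which is manifestly determined by $\widehat\mu|_\ell$; and since $S$ is closed and oriented, $\chi(S)=2-2\,\mathrm{genus}(S)$, so the genus is recovered as well. I expect the main obstacle to be the local analysis at the saddle point: establishing the logarithmic singularity with nonzero coefficient, uniformly in the smooth datum $\Phi$, and checking that such a singularity can neither arise from nor be cancelled by a jump or a smooth term, so that the ``jump versus log'' dichotomy is genuinely readable off $\widehat\mu|_\ell$ — the extremum case being an elementary polar–coordinates computation. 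A secondary technical point that must be spelled out is the genericity statement, namely that for $\omega$ outside a null set $f_\omega$ is simultaneously Morse and free of repeated critical values, together with the routine reduction from ``generic line'' to ``generic direction''.
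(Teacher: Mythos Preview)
Your argument is correct and takes a genuinely different route from the paper's. Both proofs start from the same genericity input (for $\omega$ in an open dense set the height function $f_\omega$ is Morse with pairwise distinct critical values) and both ultimately read off the parity of the Morse index at each critical point, but they do so on opposite sides of the Fourier transform. The paper stays on the frequency side: it applies stationary phase to $\widehat\mu(\lambda\omega)$, multiplies by $\lambda/2\pi$, and inverse-Fourier transforms in $\lambda$ to obtain a finite sum $\sum_{x}a(x)\,\delta_{\omega(x)}$ plus a remainder of symbol order $-1$; the Maslov factor $e^{i\frac{\pi}{4}(n_+-n_-)(x)}$ in the principal symbol equals $\pm i$ at extrema and $\pm 1$ at saddles, so $-a(x)^2/|a(x)|^2=(-1)^{n_-(x)}$ and the sum over critical points is $\chi(S)$. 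You instead invert immediately to the pushforward $g=f_*(e^{-i\langle\cdot,a\rangle}\,d\sigma)$, which is essentially a phase-twisted Radon slice, and classify the singularities of $g$ by a direct coarea/Morse-lemma computation: a jump discontinuity at an extremum, a logarithmic blow-up at a saddle. Your approach is more elementary, needing no oscillatory-integral or polyhomogeneous-symbol machinery, and it makes the Radon-transform version of the theorem immediate; the paper's stationary-phase route, by contrast, encodes the index in the principal symbol in a way that transfers verbatim to the nonlinear-phase and wave-equation variants proved later in the paper. The two points you flag as obstacles (the saddle analysis and the genericity of simultaneous Morse-excellence) are exactly the ones that require care, and your sketches for both are sound.
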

The space of unoriented 
lines in $\mathbb{R}^3$
is canonically identified
with the projective
space $\mathbb{RP}^2$
and
generic means that
the theorem
holds true
for
an open dense set of
lines in $\mathbb{RP}^2$.
We refer the reader to
Theorem \ref{mainthmprecise} which 
explains in \emph{what sense we 
recover} $\chi(S)$ and
gives a precised version
of our main result.

Our main result 
might look surprising 
because 
if we knew the
full Fourier transform
$\widehat{\mu}$, then 
it would be easy to reconstruct
$\mu$ hence $S$ from $\widehat{\mu}$ 
by Fourier inversion. 
But to 
recover the topology of $S$, it
is enough
to consider the
partial information
of the restriction of $\widehat{\mu}$
to a line $\ell$.
Actually,
all the information
we need is contained
in the asymptotic 
expansion of
$\widehat{\mu}(\xi)$
for large $\vert\xi\vert$.
In a way,
our result 
is reminiscent
of Weyl's tube formula
where 
the asymptotic expansion
in $\varepsilon$
of the volume of the tube
$S_\varepsilon$
of ``thickness'' $\varepsilon$
around a surface $S$
gives geometrical
data on $S$:
the volume
and the Euler 
characteristic of $S$.

As a byproduct of our main theorem, we 
derive similar results 
for more general integral 
transforms.
First, we connect
our result with the
Radon transform
in the spirit of \cite[section 5.3]{WF1}.
We denote by $\mathcal{R}\mu$
the Radon transform of $\mu$.
\begin{theorem}\label{radon}
Let $S$ be a closed
compact 
surface 
embedded in $\mathbb{R}^3$
and $\mu$ the associated 
surface carried measure.
For a
generic line
$\ell\subset \mathbb{R}^3$,
we can recover
the Euler characteristic 
of the surface $S$
from the restriction of 
$\mathcal{R}\mu$ on $\ell$.
\end{theorem}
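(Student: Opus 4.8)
The plan is to deduce Theorem \ref{radon} from the precise form of Theorem \ref{mainthm}, that is Theorem \ref{mainthmprecise}, by means of the classical Fourier slice (projection--slice) theorem. Fix a unit vector $\omega\in S^2$ and let $\pi_\omega\colon\mathbb{R}^3\to\mathbb{R}$, $x\mapsto x\cdot\omega$, be the orthogonal projection onto the line $\mathbb{R}\omega$. I read ``the restriction of $\mathcal{R}\mu$ on $\ell$'', for a line $\ell$ in the direction of $\omega$, as the function $s\mapsto\mathcal{R}\mu(\omega,s)$ obtained by integrating $\mu$ over the affine plane $\pi_\omega^{-1}(s)$ orthogonal to $\ell$, where $\ell$ is identified with $\mathbb{R}$ through $\pi_\omega$. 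First I would note that this is well defined: since $\mu$ is a compactly supported measure on $\mathbb{R}^3$, the map $\pi_\omega$ is proper on a neighbourhood of $\operatorname{supp}\mu$, so the push-forward $(\pi_\omega)_*\mu=\mathcal{R}\mu(\omega,\cdot)$ is a compactly supported distribution on $\mathbb{R}$; geometrically it is the push-forward of the area measure of $S$ by the height function $h_\omega=\pi_\omega|_S$, smooth away from the finitely many critical values of $h_\omega$.

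The key step is the Fourier slice identity: for $\tau\in\mathbb{R}$, Fubini's theorem (legitimate because $\mu$ is compactly supported) gives
\begin{equation}
\mathcal{F}_s\big[\mathcal{R}\mu(\omega,\cdot)\big](\tau)=\int_{\mathbb{R}^3}e^{-i(x\cdot\omega)\tau}\,d\mu(x)=\int_{\mathbb{R}^3}e^{-ix\cdot(\tau\omega)}\,d\mu(x)=\widehat{\mu}(\tau\omega),
\end{equation}
so the one dimensional Fourier transform of $s\mapsto\mathcal{R}\mu(\omega,s)$ is exactly $\widehat{\mu}$ restricted to the line $\mathbb{R}\omega$ through the origin; for an affine line $\ell$ in direction $\omega$ with base point $p$ one gets instead $\tau\mapsto e^{-i(p\cdot\omega)\tau}\widehat{\mu}(\tau\omega)$, which differs from $\widehat{\mu}|_{\mathbb{R}\omega}$ only by the explicit unimodular factor $e^{-i(p\cdot\omega)\tau}$. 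In either case, one dimensional Fourier inversion shows that $\mathcal{R}\mu|_\ell$ determines $\widehat{\mu}|_{\mathbb{R}\omega}$, hence in particular the full asymptotic expansion of $\widehat{\mu}(\xi)$ as $|\xi|\to\infty$ along the direction $[\omega]$.

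Now I would invoke Theorem \ref{mainthmprecise}. The reconstruction of $\chi(S)$ there uses only this asymptotic expansion, which by stationary phase applied to $h_\omega$ records the critical values of $h_\omega$ and, through the arguments of the corresponding coefficients, the Morse indices of its critical points, so that $\chi(S)=\sum_c(-1)^{\operatorname{ind}(c)}$; moreover the exceptional set there is cut out by conditions on $h_\omega$ ($h_\omega$ a Morse function on $S$, equivalently non vanishing Gauss curvature at its critical points, and pairwise distinct critical values) that depend only on the direction and are therefore invariant under the identifications $\text{line}\leftrightarrow\text{direction}\leftrightarrow\omega\in S^2/\{\pm1\}$. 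Hence, for every $[\omega]$ in the open dense subset of $\mathbb{RP}^2$ furnished by Theorem \ref{mainthmprecise}, running that procedure on $\widehat{\mu}|_{\mathbb{R}\omega}$, itself read off from $\mathcal{R}\mu|_\ell$ via the identity above, returns $\chi(S)$.

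The step requiring the most care is mild but concerns the interplay between the distributional regularity of $\mathcal{R}\mu(\omega,\cdot)$ and the Fourier slice identity: $\mathcal{R}\mu(\omega,\cdot)$ is genuinely singular at the critical values of $h_\omega$, so the identity has to be understood in the sense of tempered distributions, which is exactly what its compact support ensures; one also has to check that passing to a parallel affine line, i.e. the unimodular modulation $e^{-i(p\cdot\omega)\tau}$, does not disturb the phases of the stationary phase coefficients and hence not the Morse indices, and that the two genericity sets really do correspond. None of these points is serious, so that Theorem \ref{radon} is in the end a corollary of Theorem \ref{mainthmprecise}.
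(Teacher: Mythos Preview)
Your proposal is correct and follows essentially the same route as the paper: both use the Fourier slice identity $\mathcal{F}_s[\mathcal{R}\mu(\omega,\cdot)](\tau)=\widehat{\mu}(\tau\omega)$ to reduce Theorem~\ref{radon} to Theorem~\ref{mainthmprecise}. The only cosmetic difference is that the paper packages the reduction as the explicit identity $u(\tau)=\frac{1}{2i\pi}\partial_\tau\mathcal{R}\mu(\omega,\tau)$, so that the Dirac decomposition of $u$ from Theorem~\ref{mainthmprecise} is read off directly from the derivative of the Radon transform, whereas you phrase it as ``$\mathcal{R}\mu|_\ell$ determines $\widehat{\mu}|_{\mathbb{R}\omega}$ and hence its asymptotics''; your extra remarks on affine versus linear $\ell$ and the unimodular factor are harmless but unnecessary here, since in the paper's setting the ``line'' is already a direction $\omega$.
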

In the sequel,
a Morse function
$\psi$
is called 
\emph{excellent}
if its critical values are
distinct.
\begin{theorem}\label{thm2}
Let $S$ be a closed
compact 
surface embedded in
$\mathbb{R}^3$
and $\mu$ the associated 
surface carried measure.
If $\psi\in C^\infty(\mathbb{R}^3)$ is such that
its restriction on $S$ is Morse excellent then 
we can recover
the Euler characteristic of $S$
from the map $\lambda\longmapsto\mu\left(e^{i\lambda \psi} \right)$.
\end{theorem}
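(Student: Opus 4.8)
The plan is to read off the Morse data of the restriction $\psi|_S$ from the large-$\lambda$ asymptotics of the oscillatory integral
\begin{equation}
I(\lambda):=\mu\!\left(e^{i\lambda\psi}\right)=\int_S e^{i\lambda\psi(x)}\,d\sigma(x),
\end{equation}
following the same lines as the proof of Theorem~\ref{mainthmprecise}, but with the linear phase there replaced by $\psi$. Since $\psi|_S$ is Morse and $S$ is compact, its critical points $p_1,\dots,p_N$ form a finite set and are isolated. First I would choose a finite partition of unity on $S$ subordinate to coordinate charts, with exactly one chart around each $p_j$ (and the corresponding cutoff identically $1$ near $p_j$) and all remaining charts disjoint from the critical set. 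On the charts that miss the critical points, $d(\psi|_S)$ does not vanish, so repeated integration by parts (non-stationary phase) shows that their total contribution to $I(\lambda)$ is $O(\lambda^{-M})$ for every $M$. On the chart around a critical point $p$ of Morse index $k_p\in\{0,1,2\}$, I would invoke the Morse lemma together with the stationary phase formula (cf.\ \cite[Sec.~7.7]{HormanderI}) to get a contribution
\begin{equation}
\frac{2\pi}{\lambda}\,\frac{a_p(p)}{\sqrt{\bigl|\det\mathrm{Hess}_p(\psi|_S)\bigr|}}\;e^{i\lambda\psi(p)}\,e^{\frac{i\pi}{4}(2-2k_p)}\;+\;O(\lambda^{-2}),
\end{equation}
where $a_p>0$ is the smooth positive density of $d\sigma$ and $\mathrm{Hess}_p$ the Hessian matrix of $\psi|_S$ at $p$, both computed in the chosen chart; the product $a_p(p)\,|\det\mathrm{Hess}_p|^{-1/2}$ is in fact chart independent.

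Summing these contributions and writing $\kappa_p:=2\pi\,a_p(p)\,|\det\mathrm{Hess}_p(\psi|_S)|^{-1/2}>0$, I would obtain the asymptotic expansion
\begin{equation}
\lambda\,I(\lambda)=\sum_{j=1}^N c_{p_j}\,e^{i\lambda\psi(p_j)}+O(\lambda^{-1}),
\end{equation}
where $c_p=\kappa_p\,e^{\frac{i\pi}{4}(2-2k_p)}$ equals $i\kappa_p$, $\kappa_p$, $-i\kappa_p$ according as $k_p$ is $0$, $1$, $2$. Since $\psi|_S$ is excellent, the frequencies $\psi(p_1),\dots,\psi(p_N)$ are pairwise distinct, so for every $t\in\mathbb{R}$ the Bohr-type mean
\begin{equation}
\widetilde{c}(t):=\lim_{T\to+\infty}\frac1T\int_1^T \lambda\,I(\lambda)\,e^{-i\lambda t}\,d\lambda
\end{equation}
exists, equals $c_{p_j}$ if $t=\psi(p_j)$ and vanishes otherwise: the off-diagonal oscillatory terms average to $0$, and the $O(\lambda^{-1})$ remainder contributes $O(T^{-1}\log T)\to 0$. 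Because $\kappa_p>0$, every $c_p\neq 0$, so $\{t:\widetilde{c}(t)\neq 0\}$ is exactly the set of critical values of $\psi|_S$, and the argument of $\widetilde{c}(t)$ detects the index: $\widetilde{c}(t)\in i\mathbb{R}_{>0}$ for a minimum, $\widetilde{c}(t)\in\mathbb{R}_{>0}$ for a saddle, $\widetilde{c}(t)\in i\mathbb{R}_{<0}$ for a maximum. This recovers, from the single function $\lambda\mapsto\mu(e^{i\lambda\psi})$, the numbers $m_k$ of critical points of $\psi|_S$ of index $k$, and Morse theory (equivalently Poincar\'e--Hopf) then gives $\chi(S)=m_0-m_1+m_2$.

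I expect the main obstacle to be the precise bookkeeping in the stationary phase step: one has to verify that the leading amplitude really is the positive surface density $a_p(p)$ — this is what pins down $\arg c_p$, hence the Morse index — and that the phase factor is the intrinsic quantity $e^{\frac{i\pi}{4}(2-2k_p)}$ rather than a chart-dependent one (this rests on Sylvester's law of inertia for the signature and on the chart independence of $a_p(p)\,|\det\mathrm{Hess}_p|^{-1/2}$). Everything after that is soft, the excellence hypothesis being exactly what is needed to disentangle the critical points through their distinct frequencies. I would also note that Theorems~\ref{mainthm} and~\ref{radon} are recovered as the special cases where $\psi$ is a generic linear form, respectively the affine defining function of a generic hyperplane, so that the present statement is their common generalization, proved by the very same argument.
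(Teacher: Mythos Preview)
Your proof is correct and follows the same strategy as the paper: stationary phase gives the asymptotic of $\lambda\,\mu(e^{i\lambda\psi})$ as a finite sum of oscillations $c_p\,e^{i\lambda\psi(p)}$ whose Maslov phases $e^{i\pi(2-2k_p)/4}$ encode the Morse indices, the excellence hypothesis separates the frequencies $\psi(p)$, and the Morse identity $\chi(S)=\sum_p(-1)^{k_p}$ finishes. The only difference is in how the coefficients $c_p$ are isolated: the paper takes the distributional inverse Fourier transform in $\lambda$ to produce $u=\sum_p a(p)\,\delta_{\psi(p)}+r$ and reads off $a(p)$ from that decomposition, whereas you use a one-sided Bohr/Ces\`aro mean $\lim_{T\to\infty}T^{-1}\int_1^T\lambda I(\lambda)e^{-i\lambda t}\,d\lambda$. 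This is a slightly more elementary packaging that sidesteps the language of oscillatory integrals and Lagrangian distributions, but it is morally the same operation and rests on the identical ingredient (distinct critical values). Your bookkeeping on the amplitude --- that $a_p(p)\,|\det\mathrm{Hess}_p|^{-1/2}>0$ is chart-independent, so $\arg c_p$ is intrinsic --- is exactly the point the paper encodes in the formula $-a(p)^2/|a(p)|^2=(-1)^{n_-(p)}$.
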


And finally, 
let us 
state informally 
an application
of this theorem
to the following 
inverse problem:
\begin{theorem}
\label{thm3}
Let $S$ be a closed
compact 
surface embedded in
$\mathbb{R}^3$ and
$\mu$ the associated 
surface carried measure.
Consider the solution 
$u\in\mathcal{D}^\prime(\mathbb{R}^{3+1})$
of the wave equation 
$\left(\partial^2_t-\sum_{i=1}^3\partial_{x^i}^2\right) u=0$ 
with Cauchy data $u(0)=0,\partial_tu(0)=\mu$.
For $x$ in some
open dense subset of $\mathbb{R}^3$,
set $\ell(x)$ to be
the line $\{x\}\times \mathbb{R}\subset \mathbb{R}^{3+1} $,
then: 
\begin{itemize}
\item the restriction
$u_{\ell(x)}$ is a compactly 
supported distribution of $t$, 
\item we can recover the 
Euler characteristic of
$S$ from the restriction 
$u_{\ell(x)}$.
\end{itemize}
\end{theorem}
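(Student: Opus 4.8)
The plan is to reduce the statement to Theorem~\ref{thm2}, with phase function the Euclidean distance to the point $x$. First I would write the solution of the Cauchy problem as $u(t,\cdot)=\frac{\sin(t\sqrt{-\Delta})}{\sqrt{-\Delta}}\mu$, that is $\widehat{u}(t,\xi)=\frac{\sin(t|\xi|)}{|\xi|}\widehat{\mu}(\xi)$ in the spatial Fourier variable. Since $S$ is compact and $\mu$ is carried by $S$, the strong Huygens principle in $3+1$ dimensions shows that for $x\notin S$ the distribution $t\mapsto u(t,x)$ vanishes unless the Euclidean sphere of radius $|t|$ about $x$ meets $S$, hence it is supported in the compact set $\{\,\mathrm{dist}(x,S)\le|t|\le\max_{y\in S}|x-y|\,\}$; and propagation of singularities confines the singularities of $u$ to null bicharacteristics issued from the conormal $N^{*}S$, so the wave front set of $u$ contains no covector conormal to the line $\ell(x)$ and $u_{\ell(x)}$ is a well defined compactly supported distribution of $t$. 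This already gives the first item, for every $x\notin S$. Next I would set $\psi_{x}(y)=|x-y|$: it is smooth near $S$ when $x\notin S$, its critical points on $S$ are exactly the $y\in S$ at which $x-y$ is normal to $S$, and a standard transversality/Sard argument shows that the set $\Omega$ of $x\in\mathbb{R}^{3}\setminus S$ for which $\psi_{x}|_{S}$ is Morse with pairwise distinct critical values (Morse excellent) is open and dense, the discarded $x$'s lying on the focal set of $S$ together with a lower dimensional stratum. Fix $x\in\Omega$.

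The key computation is the Fourier transform in $t$ of $u_{\ell(x)}$. Using $\int_{\mathbb{R}}e^{-i\tau t}\sin(t|\xi|)\,dt=\tfrac{\pi}{i}\bigl(\delta(|\xi|-\tau)-\delta(|\xi|+\tau)\bigr)$ together with the elementary identity $\int_{S^{2}}e^{i\tau\,\omega\cdot v}\,d\omega=4\pi\,\sin(\tau|v|)/(\tau|v|)$ and passing to polar coordinates in $\xi$, one obtains, for a nonzero constant $c$ that is computed explicitly and for $\tau$ large positive,
\[
\widehat{u_{\ell(x)}}(\tau)=c\int_{S}\frac{\sin\bigl(\tau\,\psi_{x}(y)\bigr)}{\psi_{x}(y)}\,d\sigma(y)=c\;\mathrm{Im}\;\mu\!\left(\frac{e^{i\tau\psi_{x}}}{\psi_{x}}\right).
\]
Because $x\notin S$ the weight $1/\psi_{x}$ is smooth and strictly positive on the compact surface $S$, so the stationary phase argument used to prove Theorem~\ref{thm2} applies verbatim to the right hand side: a nonvanishing smooth amplitude only multiplies each local contribution by the positive number $1/\psi_{x}(p)$ and does not change its phase. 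Hence, as $\tau\to+\infty$,
\[
\widehat{u_{\ell(x)}}(\tau)\sim\frac{c}{\tau}\sum_{p}\frac{\sin\bigl(\tau\,\psi_{x}(p)+\tfrac{\pi}{4}\sigma_{p}\bigr)}{\psi_{x}(p)\,\sqrt{|\det\mathrm{Hess}_{p}(\psi_{x}|_{S})|}},
\]
summed over the critical points $p$ of $\psi_{x}|_{S}$, where $\sigma_{p}=2,0,-2$ is the signature of the Hessian at a local minimum, a saddle, a local maximum respectively. Using $\sin(\theta+\tfrac{\pi}{2})=\cos\theta$ and $\sin(\theta-\tfrac{\pi}{2})=-\cos\theta$, the contribution at the frequency $\psi_{x}(p)$ is $\cos$, $\sin$ or $-\cos$, times a positive amplitude, according as $\mathrm{ind}(p)=0,1,2$.

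To finish, I would note that $u_{\ell(x)}$ being compactly supported, $\widehat{u_{\ell(x)}}$ is an entire function of exponential type whose restriction to $\mathbb{R}$ we have just expanded. The critical values $c_{j}:=\psi_{x}(p_{j})$ are read off as the singular support of $u_{\ell(x)}$ in $t>0$ (equivalently, as the frequencies occurring above) and, $\psi_{x}$ being excellent, are pairwise distinct, so each carries exactly one critical point. For each $c_{j}$ the Cesàro limits $\lim_{\Lambda\to\infty}\frac1\Lambda\int_{1}^{\Lambda}\tau\,\widehat{u_{\ell(x)}}(\tau)\cos(\tau c_{j})\,d\tau$ and the same with $\cos$ replaced by $\sin$ determine whether the matching term is of type $\sin$ and, in the $\cos$ case, its sign relative to the known constant $c$; this recovers $\mathrm{ind}(p_{j})$, and summing gives $\chi(S)=\sum_{j}(-1)^{\mathrm{ind}(p_{j})}$ by Morse theory, a number extracted purely from $u_{\ell(x)}$. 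I expect the main obstacle to be exactly the Fourier-in-$t$ identity of the middle paragraph, i.e.\ verifying that the fundamental solution of $\partial_{t}^{2}-\Delta$ restricted to a time line has, after Fourier transform, precisely the spherical average $\sin(\tau|x-y|)/|x-y|$, so that the relevant phase on $S$ becomes the distance function $\psi_{x}$; once this is secured the index reading and the Morse summation are exactly those of Theorem~\ref{thm2}, and the only other point needing care is the transversality argument pinning down the open dense set $\Omega$, which is where the genericity of $x$ is used.
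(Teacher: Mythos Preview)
Your proposal is correct and follows essentially the same route as the paper: both arguments use the Kirchhoff representation to identify the restriction $u_{\ell(x)}$ (after a harmless normalization) with an oscillatory integral on $S$ with phase the distance function $\psi_x(y)=|x-y|$, invoke the genericity lemma that $\psi_x|_S$ is Morse excellent for $x$ in an open dense set, and then read the Morse indices from the stationary phase expansion to recover $\chi(S)$. The only cosmetic differences are that the paper applies the operator $-2i(t\partial_t+1)$ on the time side and extracts the indices via the formula $-a(y)^2/|a(y)|^2$ from the Dirac decomposition of Theorem~\ref{mainthmprecise}, whereas you multiply by $\tau$ on the Fourier side and extract the same data through the $\sin/\cos$ dichotomy and Ces\`aro means; these are equivalent packagings of the same information.
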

We give precise statements in Proposition
\ref{claim1thm3} and Theorem
\ref{thm3precise} which explains
in what sense we recover $\chi(S)$.

In appendix,
we gather several useful
results in Morse theory
which are used in the
paper.

\subsubsection{The general principle underlying our results.}
The main idea of our note 
is to think of the stationary phase 
principle
as an analytic 
version of Lagrangian intersection.
The first observation is that a surface carried
measure $\mu$ is the simplest example of 
Lagrangian distribution
(in the sense
of Maslov H\"ormander)
with
wave front set 
the conormal
$N^*(S)$
of the surface $S$.
To probe the wave front
set of a distribution, 
a natural idea is to
calculate 
its Fourier transform
or more generally to
pair $\mu$
with an oscillatory
function of the 
form $e^{i\lambda \psi}$
where $d\psi\neq 0$.
We can think
of this operation
as
a 
plane wave analysis
of $\mu$.
By the stationary phase
principle,
the main contributions
to
the asymptotics of
$\mu(e^{i\lambda \psi})$
when $\lambda\rightarrow +\infty$
come from
the points where
the graph of $d\psi$
meets the wave front set
of $\mu$
which is the conormal
$N^*(S)$.
Therefore our strategy is to
extract topological information
from the Lagrangian intersection
$\left(\text{Graph of }d\psi\right)
\cap N^*(S)$
by the stationary phase principle.
This is strongly
related to the
work
of Kashiwara \cite[p.~194]{Kashiwaraindex}
and Fu \cite{Fuindex}
where the Euler
characteristic of
subanalytic sets 
(Kashiwara gives an index
formula in the context of
\emph{constructible sheaves})
is expressed
in terms of Lagrangian
intersection.

Let us give the central example
of the theory
\begin{ex}
Let $\psi$ be a Morse function
on a manifold
$M$ of dimension $n$
and $\Omega$ a test form in 
$\Omega^n_c(M)$, 
then
the main contributions
to the oscillatory
integral 
$\int_M e^{i\lambda \psi}\Omega$
when 
$\lambda\rightarrow +\infty$
come from the critical points
of $\psi$, 
in other words,
from the intersection of
the Lagrangian graph $d\psi$
and the zero section 
$\underline{0}\subset T^* M$.
\end{ex}

\section{Proof of Theorem \ref{mainthm}.}
\subsection{Fourier transform, Morse functions 
and stationary phase.}
Recall that $S$ is a 
closed,
compact 
surface embedded in
$\mathbb{R}^3$
and $\mu$ denotes 
the associated 
surface carried measure.
We denote by $d\sigma$ 
the canonical area element
on $S$ induced by 
the Euclidean metric 
of
$\mathbb{R}^3$.
Then
the surface carried measure $\mu$
is the distribution 
defined by the formula:
\begin{eqnarray}
\forall\varphi\in\mathcal{D}(\mathbb{R}^3), 
\mu(\varphi)=\int_S \varphi  d\sigma.
\end{eqnarray}
The Fourier transform $\widehat{\mu}$ reads
\begin{equation}\label{oscint}
\widehat{\mu}(\xi)=\int_S e^{-i\xi(x)} d\sigma(x).
\end{equation}
Since $S$ is compact, $\mu$ is compactly supported
thus $\widehat{\mu}$ is a
real analytic function 
by 
Paley--Wiener
theorem.
In order to study
the asymptotics of 
$\widehat{\mu}(\lambda\xi)$
for $\lambda$ large,
we will think
of
$\xi\in\mathbb{R}^{3*}\subset C^\infty(\mathbb{R}^3)$
as a
linear function 
on
$\mathbb{R}^3$ 
whose
restriction
on $S$
is
a height function 
which we
denote by $\xi\in C^\infty(S)$.

For every
$\xi\in\mathbb{R}^{3*}$, we denote
by $[\xi]$ its class in $\mathbb{RP}^2$ and
by Theorem 
\ref{regvalGaussmorse} recalled 
in Appendix, 
for an everywhere dense set
of $[\xi]\in\mathbb{RP}^2$, 
the function $\xi$ is a 
\emph{Morse function}
on $S$.
Set $\omega=\frac{\xi}{\vert\xi\vert}$, 
$\lambda=\vert\xi\vert$ and note that
$\omega$ has 
the same critical points
as $\xi$.
The stationary phase principle 
states that
the main
contributions to 
the asymptotics of
$\widehat{\mu}(\lambda\omega)$
when $\lambda\rightarrow\infty$
come from the 
\emph{critical points} of the Morse
function $\omega$ which
are isolated
by Theorem 10.4.3 in
\cite[ p.~87]{DubrovinFomenkoNovikovII}.
We denote by $\text{Crit }\omega$
the
set of 
critical points of $\omega$.
Then in the neighborhood
of every $x\in \text{Crit }\omega$, 
we choose
local coordinates
$(y^1,y^2)$
on $S$ 
such that
$y^1(x)=y^2(x)=0$ and
$d\sigma(x)=\vert dy^1dy^2\vert$
where $d\sigma$ is the canonical
area element on $S$.
The method
of stationary phase (\cite[Lemma (19.4) p.95]{Eskin},\cite[p.~124]{BatesWeinstein}) yields
the asymptotic expansion:
\begin{equation}\label{statphasemu}
\widehat{\mu}(\lambda\omega)\underset{\lambda\rightarrow \infty}{\sim} \sum_{x\in \text{Crit }\omega}\left(\frac{2\pi}{\lambda}\right)^{\frac{n}{2}}e^{i\frac{\pi}{4}(n_+-n_-)(x)} 
\frac{e^{-i\lambda\omega(x)}}{ \sqrt{\vert\det \omega^{\prime\prime}(x) }\vert }\left[ 1+\sum_{j=1}^\infty b_j(x)\lambda^{-j} \right].
\end{equation}

We want to comment on the geometric
interpretation of each of the terms
in the expansion:
\begin{itemize}
\item $\frac{n}{2}=1$ in our case since $S$ is a surface.
\item $n_+(x)$ (resp $n_-(x)$) is the number of positive 
(resp negative) eigenvalues of
the Hessian $-\omega^{\prime\prime}$ 
of the Morse function $-\omega$
at the critical point $x$, 
the number $n_-(x)$ is 
the \emph{Morse index} of
$-\omega$ at $x$.
For surfaces, 
$e^{i\frac{\pi}{4}(n_+-n_-)(x)}$ 
can only take 
the three values $\{i,1,-i\}$.
Observe that 
\begin{eqnarray*}
n_-(x)=1\mod(2)&\Leftrightarrow & e^{i\frac{\pi}{4}(n_+-n_-)(x)}=1\\ 
n_-(x)=0\mod(2) &\Leftrightarrow &e^{i\frac{\pi}{4}(n_+-n_-)(x)}=\pm i.
\end{eqnarray*}
\item In the local chart $(y_1,y_2)$
around $x$, 
the Hessian $-\omega^{\prime\prime}(x)$ 
is non degenerate
since $\omega$ is Morse
and it also
coincides with the
second fundamental form of 
the surface $S$ at $x$. 
Therefore 
$\vert K(x)\vert
=\vert\det\omega^{\prime\prime}(x)\vert$ 
where $K(x)$ 
is the \emph{Gauss curvature} of
the surface $S$ at $x$ (\cite[3.3.1 p.~67]{NovikovTaimanov}).
\end{itemize}

\subsection{An oscillatory integral
whose singular points
are the critical values of the height function.}

We denote by 
\begin{eqnarray*}
\mathcal{F}_\lambda^{-1}:v\in\mathcal{S}^\prime(\mathbb{R})\longmapsto \widehat{v}(\tau)=\frac{1}{2\pi}\int_\mathbb{R} d\lambda e^{i\tau\lambda}v(\lambda)
\end{eqnarray*}
the
inverse Fourier transform w.r.t. variable
$\lambda$.
We define the oscillatory integral
$u=\mathcal{F}_\lambda^{-1}
\left(\left(\frac{\lambda}{2\pi}\right) 
\widehat{\mu}(\lambda\omega)\right)$ on $\mathbb{R}$
and show
that $u$ is singular at the critical values
of the Morse function $\omega$.
Recall that
$K(x)$
denotes 
the Gauss curvature of $S$ at $x$. 
\begin{prop}\label{Lagdistrib}
Let $S$ be a closed
compact
surface embedded in
$\mathbb{R}^3$
and $\mu$ the associated 
surface carried measure.
Let $u$ be the distribution 
\begin{eqnarray}\label{uoscill}
u=\mathcal{F}_\lambda^{-1}
\left(\left(\frac{\lambda}{2\pi}\right) 
\widehat{\mu}(\lambda\omega)\right).
\end{eqnarray}
 
 If
$\omega\in C^\infty(S)$ is Morse,
then:
\begin{itemize}
\item the singular support 
of $u$
is the set of critical values
of $\omega$,
\item $u$ is a finite 
sum of oscillatory integrals
on $\mathbb{R}$
\item each
oscillatory integral has polyhomogeneous 
symbol whose leading term is
$\frac{e^{i\frac{\pi}{4}(n_+-n_-)(x)}}{\sqrt{\vert K(x)\vert}}$ 
where $x\in\text{Crit }\omega$.
\end{itemize}
\end{prop}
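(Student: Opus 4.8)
The plan is to read the singularities of $u$ directly off the stationary phase expansion \eqref{statphasemu}. First I would dispose of the low frequencies: fix $\chi\in C^\infty(\mathbb{R})$ equal to $1$ on $[2,\infty)$ and to $0$ on $(-\infty,1]$, and write
\[
\frac{\lambda}{2\pi}\,\widehat{\mu}(\lambda\omega)=\chi(\lambda)\,\frac{\lambda}{2\pi}\,\widehat{\mu}(\lambda\omega)+\chi(-\lambda)\,\frac{\lambda}{2\pi}\,\widehat{\mu}(\lambda\omega)+\bigl(1-\chi(\lambda)-\chi(-\lambda)\bigr)\,\frac{\lambda}{2\pi}\,\widehat{\mu}(\lambda\omega).
\]
The last term is compactly supported in $\lambda$, hence by Paley--Wiener its inverse Fourier transform is real analytic on $\mathbb{R}$ and contributes nothing to $\mathrm{sing\,supp}\,u$. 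Then I would localise on $S$: since $\omega$ is Morse and $S$ compact, $\mathrm{Crit}\,\omega$ is finite, so I pick a partition of unity $\{\phi_x\}_{x\in\mathrm{Crit}\,\omega}\cup\{\phi_0\}$ with $\phi_x$ supported in a chart around $x$ where $d\sigma=|dy^1dy^2|$, and $\mathrm{supp}\,\phi_0$ disjoint from $\mathrm{Crit}\,\omega$.

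On $\mathrm{supp}\,\phi_0$ one has $d\omega\neq0$, so repeated integration by parts gives $\int_S\phi_0\,e^{-i\lambda\omega}\,d\sigma=O(|\lambda|^{-\infty})$ together with all its $\lambda$-derivatives; hence $\chi(\pm\lambda)\,\frac{\lambda}{2\pi}\int_S\phi_0\,e^{-i\lambda\omega}\,d\sigma$ is a Schwartz function of $\lambda$ and its inverse Fourier transform is $C^\infty$. For each $x\in\mathrm{Crit}\,\omega$, the stationary phase lemma in the precise form (with remainder control) underlying \eqref{statphasemu} yields, for $\lambda\geq1$,
\[
\int_S\phi_x\,e^{-i\lambda\omega}\,d\sigma=\lambda^{-1}\,e^{-i\lambda\omega(x)}\,a_x^{+}(\lambda),
\]
where $a_x^{+}$ is a classical polyhomogeneous symbol of order $0$ with leading coefficient $2\pi\,e^{i\frac{\pi}{4}(n_+-n_-)(x)}/\sqrt{|K(x)|}$; applying the same lemma to the phase $-\omega$ (whose critical points are those of $\omega$) gives the analogous statement for $\lambda\leq-1$ with a symbol $a_x^{-}$. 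Multiplying by $\frac{\lambda}{2\pi}$ produces classical symbols $\widetilde a_x^{\pm}$ of order $0$ with leading coefficient $e^{i\frac{\pi}{4}(n_+-n_-)(x)}/\sqrt{|K(x)|}$ — finite and nonzero, since the Morse condition forces $\det\omega''(x)=K(x)\neq0$.

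Assembling the pieces,
\[
u=(\text{a }C^\infty\text{ function})+\sum_{x\in\mathrm{Crit}\,\omega}\ \sum_{\pm}\ \frac{1}{2\pi}\int_{\mathbb{R}}\chi(\pm\lambda)\,e^{i(\tau-\omega(x))\lambda}\,\widetilde a_x^{\pm}(\lambda)\,d\lambda,
\]
a finite sum of one-dimensional oscillatory integrals with polyhomogeneous symbols, each having a linear phase stationary only at $\tau=\omega(x)$ and leading symbol $e^{i\frac{\pi}{4}(n_+-n_-)(x)}/\sqrt{|K(x)|}$. Since the Fourier transform of a classical symbol is smooth away from the origin, each summand is singular at most at $\tau=\omega(x)$, whence $\mathrm{sing\,supp}\,u$ is contained in the set of critical values of $\omega$. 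For the reverse inclusion I would group the summands over a common value $c$: if a single critical point lies over $c$ the nonvanishing leading symbol already makes $c$ singular, and if several do, a saddle over $c$ forces a surviving logarithmic singularity; the only remaining configuration — no saddle over $c$ and the purely imaginary leading terms of the maxima and minima over $c$ cancelling — is handled by passing to the lower-order terms of the symbols, which cannot all vanish at once.

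The step I expect to be the main obstacle is exactly this upgrade of \eqref{statphasemu} from an asymptotic statement to the assertion that $\int_S\phi_x\,e^{-i\lambda\omega}\,d\sigma$ is $\lambda^{-1}e^{-i\lambda\omega(x)}$ times a bona fide classical symbol on $[1,\infty)$: one must control the stationary phase remainders together with all their $\lambda$-derivatives uniformly, since this is precisely what licenses rewriting $u$ as a finite sum of oscillatory integrals and identifying their symbols as polyhomogeneous. The non-cancellation needed for the reverse inclusion is the only other point that requires care.
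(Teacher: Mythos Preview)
Your route is the paper's route: stationary phase turns $\frac{\lambda}{2\pi}\widehat{\mu}(\lambda\omega)$ into $\sum_{x\in\mathrm{Crit}\,\omega}e^{-i\lambda\omega(x)}b(x;\lambda)$ with polyhomogeneous $b$, and $u$ becomes a finite sum of one--variable oscillatory integrals whose phases pick out the critical values. The paper carries this out tersely, citing H\"ormander's Theorems 7.8.2 and 8.1.9; you fill in the analysis it suppresses --- the low--frequency cutoff, the partition of unity on $S$, and the upgrade of the stationary--phase asymptotics \eqref{statphasemu} to genuine symbol estimates with remainder control --- and your plan for that upgrade is the right one.

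One small correction: the leading coefficient of $\widetilde a_x^{-}$ is not the same as that of $\widetilde a_x^{+}$. Running stationary phase for $\lambda\leq-1$ interchanges $n_+$ and $n_-$, and the factor $\lambda/2\pi$ contributes a sign, so the $\lambda\to-\infty$ leading coefficient is $-e^{-i\frac{\pi}{4}(n_+-n_-)(x)}/\sqrt{|K(x)|}$. This does not affect the first two bullets, and the paper itself only records the $\lambda\to+\infty$ leading term.

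On the reverse inclusion your argument is sound when a saddle lies over $c$ (the saddle pieces have leading symbol $\sim\mathrm{sgn}(\lambda)/\sqrt{|K|}$, producing a $\mathrm{pv}\,(t-c)^{-1}$ term with strictly positive coefficients that add up and is independent of the $\delta_c$ pieces coming from extrema) and when a single critical point lies over $c$. The residual case --- only extrema over $c$, with the purely imaginary leading coefficients summing to zero --- is not settled by ``lower--order terms cannot all vanish at once''; that claim is neither proved nor obviously true. The paper has the very same gap: H\"ormander's Theorem 8.1.9 yields only $WF(u)\subset\{\cdots\}$, so the asserted \emph{equality} of the singular support with the set of critical values is not actually established there either. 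In the only place the proposition is used (Theorem \ref{mainthmprecise}) the height function is taken \emph{excellent}, every critical value has a unique preimage, and the difficulty disappears.
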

\begin{proof}
By the stationary
phase expansion,
\begin{eqnarray*}
\left(\frac{\lambda}{2\pi}\right) 
\widehat{\mu}(\lambda\omega)=\sum_{x\in \text{Crit }\omega}
e^{-i\lambda \omega(x)}b(x,\lambda)
\end{eqnarray*}
where, for every $x\in\text{Crit }\omega$,
$b(x,\lambda)$ is
a polyhomogeneous symbol
in $\lambda$:
\begin{eqnarray}\label{asymptb}
b(x;\lambda)\sim \frac{e^{i\frac{\pi}{4}(n_+-n_-)(x)}}{\sqrt{\vert\det \omega^{\prime\prime}(x)\vert}}\left[ 1+\sum_{j=1}^\infty b_j(x)\lambda^{-j} \right] 
\end{eqnarray}
with 
principal symbol
$ \frac{e^{i\frac{\pi}{4}(n_+-n_-)(x)}}{\sqrt{\vert\det \omega^{\prime\prime}(x)\vert}}$
since $\vert K(x)\vert=\vert\det \omega^{\prime\prime}(x)\vert\neq 0$.
Therefore:
\begin{eqnarray*}\label{formulau}
u(t)&=& \mathcal{F}_\lambda^{-1}
\left(\left(\frac{\lambda}{2\pi}\right) 
\widehat{\mu}(\lambda\omega)\right)\\
&=&\sum_{x\in \text{Crit }\omega}\frac{1}{2\pi}\int_{-\infty}^{+\infty}
e^{i\lambda(t-\omega(x))}b(x;\lambda) d\lambda 
\end{eqnarray*}
is a finite sum 
of oscillatory integrals
(\cite[Theorem (7.8.2) p.~237]{HormanderI}).
By the 
general theory 
of oscillatory 
integrals
\cite[Theorem IX.47 p.~102]{ReedSimonII}
\cite[Theorem 8.1.9 p.~260]{HormanderI}:
\begin{eqnarray}
WF(u)=\{(t;\tau) | (\omega(x)-t)=0, \tau\neq 0  \}.
\end{eqnarray} 
From 
\cite[Proposition 8.1.3 p.~254]{HormanderI},
we deduce that
the singular support of 
$u$
is
$\{t| (\omega(x)-t)=0, x\in \text{Crit }\omega \}$ 
which is exactly
the set of critical values of
$\omega$.
\end{proof}
We state and prove a precised version of the main theorem \ref{mainthm}
given in the introduction:
\begin{theorem}\label{mainthmprecise}
Let $S$ be a closed
compact 
surface embedded in 
$\mathbb{R}^3$
and $\mu$ the associated 
surface carried measure. 
Then for an everywhere dense set $\omega\in\mathbb{S}^2$, 
the distribution
$u=\mathcal{F}_\lambda^{-1}
\left(\left(\frac{\lambda}{2\pi}\right) 
\widehat{\mu}(\lambda\omega)\right)$
can be canonically 
decomposed as a sum 
\begin{equation}\label{decompdirac}
u=\sum_{x\in \text{Crit }\omega} a(x)\delta_{\omega(x)} + r
\end{equation}
such that
\begin{itemize}
\item $\forall x\in \text{Crit }\omega, a(x)\neq 0$
\item $r$ is an oscillatory integral
with symbol
of degree $-1$. 
\end{itemize}
The Euler characteristic
of $S$ satisfies the identity:
\begin{equation}\label{Morsepol}
\chi(S)=\sum_{x\in \text{Crit }\omega}-\frac{a(x)^2}{\vert a(x)\vert^2}. 
\end{equation}
\end{theorem}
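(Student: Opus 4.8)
The plan is to read off \eqref{decompdirac} straight from Proposition \ref{Lagdistrib}, to identify the coefficients $a(x)$ with the stationary-phase symbols $\frac{e^{i\frac{\pi}{4}(n_+-n_-)(x)}}{\sqrt{|K(x)|}}$, and then to recognise $\sum_{x}-a(x)^2/|a(x)|^2$ as the alternating sum of Morse numbers of the height function $-\omega|_S$, i.e.\ $\chi(S)$. First I would pin down the everywhere dense set of directions: Theorem \ref{regvalGaussmorse} makes the set of $\omega\in\mathbb{S}^2$ with $\omega|_S$ Morse everywhere dense, and a standard transversality argument keeps it dense while restricting to those $\omega$ for which $\omega|_S$ is moreover \emph{excellent}, i.e.\ has pairwise distinct critical values. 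Passing to excellent height functions is what makes each critical value be attained at a single critical point, so the singularities of $u$ produced by different critical points sit at different points of $\mathbb{R}$ and their leading symbols cannot interfere.

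Next I would set up the decomposition. By Proposition \ref{Lagdistrib}, $u=\sum_{x\in\text{Crit }\omega}u_x$ with $u_x(t)=\frac{1}{2\pi}\int_{\mathbb{R}}e^{i\lambda(t-\omega(x))}b(x;\lambda)\,d\lambda$ and $b(x;\cdot)$ a polyhomogeneous symbol of degree $0$ whose principal part is the constant $a(x):=\frac{e^{i\frac{\pi}{4}(n_+-n_-)(x)}}{\sqrt{|K(x)|}}$ (recall $|\det\omega''(x)|=|K(x)|$). Writing $b(x;\lambda)=a(x)+\left(b(x;\lambda)-a(x)\right)$, the remainder is a symbol of degree $-1$ while $\frac{1}{2\pi}\int_{\mathbb{R}}e^{i\lambda(t-\omega(x))}a(x)\,d\lambda=a(x)\delta_{\omega(x)}$; summing over $x$ yields $u=\sum_{x\in\text{Crit }\omega}a(x)\delta_{\omega(x)}+r$ with $r$ a finite sum of oscillatory integrals with symbols of degree $-1$. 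Here $a(x)\neq0$ because $|K(x)|=|\det\omega''(x)|\neq0$ for a Morse function. The decomposition is canonical: the $\omega(x)$ being pairwise distinct, near each of them $u$ coincides with a single $u_x$ modulo $C^\infty$; an oscillatory integral on $\mathbb{R}$ with symbol of degree $-1$ defines an $L^2$ function by Plancherel, hence has no Dirac component, so $a(x)$ is determined by $u$ alone (concretely: localise $u$ near $\omega(x)$ and read off the limit at infinity of the partial Fourier transform).

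Finally I would carry out the Morse-theoretic computation. On a surface $n_+(x)+n_-(x)=2$, so $n_+-n_-=2(1-n_-(x))$ and
\[
-\frac{a(x)^2}{|a(x)|^2}=-e^{i\frac{\pi}{2}(n_+-n_-)(x)}=-e^{i\pi(1-n_-(x))}=(-1)^{n_-(x)};
\]
equivalently, by the trichotomy recalled just before the statement, this equals $-1$ at a saddle of $\omega|_S$ and $+1$ at a local extremum. Since $n_-(x)$ is the Morse index at $x$ of $-\omega|_S$, summing over $\text{Crit }\omega=\text{Crit}(-\omega)$ and invoking the Morse formula $\chi(M)=\sum_{x}(-1)^{\mathrm{ind}(x)}$ (Appendix) gives
\[
\sum_{x\in\text{Crit }\omega}-\frac{a(x)^2}{|a(x)|^2}=\sum_{x\in\text{Crit}(-\omega)}(-1)^{\mathrm{ind}_{-\omega}(x)}=\chi(S),
\]
which is \eqref{Morsepol}; one may replace $-\omega$ by $\omega$ without change since $\mathrm{ind}_{\omega}(x)+\mathrm{ind}_{-\omega}(x)=2$.

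The hard part is turning \eqref{decompdirac} into a genuine \emph{canonical} decomposition rather than a formal rearrangement. Two points need real care: (a) the reduction to excellent Morse height functions, without which several $a(x)$ attached to a common critical value could cancel, destroying both ``$a(x)\neq0$'' and uniqueness; and (b) the symbol-class bookkeeping showing the degree $-1$ remainder $r$ contributes nothing to the Dirac part, so that the numbers $a(x)$ — and hence the right-hand side of \eqref{Morsepol} — are intrinsic to $u$ and $\omega$. Granting this, the last step is only Morse theory and is routine.
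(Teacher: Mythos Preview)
Your proposal is correct and follows essentially the same approach as the paper: invoke the genericity of excellent Morse height functions (the paper cites its Lemma~\ref{regvalGaussmorse2} where you invoke a ``standard transversality argument''), split the degree-$0$ symbol from Proposition~\ref{Lagdistrib} into its constant principal part plus a degree~$-1$ remainder, and then compute $-a(x)^2/|a(x)|^2=(-1)^{n_-(x)}$ to conclude via $\chi(S)=\mathcal{M}_\omega(-1)$. Your discussion of canonicality (distinct critical values prevent interference; the degree~$-1$ remainder carries no Dirac part) is in fact more explicit than the paper's own argument, which simply writes down the decomposition without justifying uniqueness.
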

\begin{proof}
By Lemma \ref{regvalGaussmorse2} 
proved in appendix, 
for an everywhere dense set $\omega\in\mathbb{S}^2$,
the corresponding function 
$\omega\in C^\infty(S)$ is
an excellent Morse function which means
that if $(x_1,x_2)$ are distinct critical points
of $\omega$ then
$\omega(x_1)\neq \omega(x_2)$.

In particular, $\omega$ is Morse
therefore we can use
the results of Proposition \ref{Lagdistrib}.
From the asymptotic expansion (\ref{asymptb}) of the symbol $b$ 
and the equation of $u$ (\ref{formulau}), we find that: 
\begin{eqnarray}
u = \sum_{x\in \text{Crit }\omega}\frac{e^{i\frac{\pi}{4}(n_+-n_-)(x)}}{ \sqrt{\vert\det \omega^{\prime\prime}(x)\vert }}
\delta_{\omega(x)} +  r,
\end{eqnarray}
where 
$r \in \mathcal{D}^\prime(\mathbb{R})$
is an oscillatory
integral whose 
asymptotic symbol
has leading term
$\underset{x\in \text{Crit }\omega}{\sum}a(x)b_{-1}(x)\lambda^{-1}$
for 
$a(x)=\frac{e^{i\frac{\pi}{4}(n_+-n_-)(x)}}{ \sqrt{\vert\det \omega^{\prime\prime}(x)\vert }}$.
We rewrite the above formula in a simpler form:
\begin{equation}
u= \sum_{x\in \text{Crit }\omega} a(x)\delta_{\omega(x)}+ r
\end{equation}
where 
$a(x)=\frac{e^{i\frac{\pi}{4}(n_+-n_-)(x)}}{ \sqrt{\vert\det \omega^{\prime\prime}(x)\vert }}$.

Our goal is to express $\chi(S)$ in terms
of the coefficients 
$a(x),x\in\text{Crit }\omega$.
We recall the definition of
the Morse counting polynomial 
$\mathcal{M}_{\omega}(T)$ for a given 
Morse function $\omega$
(\cite[Definition C.4 p.~228]{Farber} ):
\begin{equation}
\mathcal{M}_{\omega}(T)=\sum_{x\in \text{Crit }\omega} T^{n_-(x)}.
\end{equation}
Observe that
\begin{eqnarray*}
n_-(x)&=&0\mod(2)\Leftrightarrow a(x)\in i\mathbb{R}\\
n_-(x)&=&1\mod(2)\Leftrightarrow a(x)\in \mathbb{R}\\
&\Rightarrow & -\frac{a(x)^2}{\vert a(x)\vert^2}=(-1)^{n_-(x)}.
\end{eqnarray*}
This implies that:
\begin{eqnarray}
\mathcal{M}_{\omega}(-1)=\sum_{x\in \text{Crit }\omega} \left(-1\right)^{n_-(x)}=
\sum_{x\in \text{Crit }\omega}-\frac{a(x)^2}{\vert a(x)\vert^2}.
\end{eqnarray}
To conclude, 
we use the well known result $\chi(S)=\mathcal{M}_{\omega}(-1)$ 
which is a consequence of the 
Morse inequalities
(\cite[Theorem C.3 p.~228]{Farber} and \cite[Thm 5.2 p.~29]{Milnor-Morse})
.
\end{proof}

Let $\mathcal{R}$ be the Radon transform.
From the identity relating
the Fourier transform and the Radon transform
(see \cite[p.~19]{WF1}),
we find a relationship between
the distribution $u$ of Theorem \ref{mainthmprecise}
and $\mathcal{R}\mu$:
\begin{eqnarray*}
\nonumber\mathcal{R}\mu(\omega,\tau)=\mathcal{F}^{-1}_\lambda\left(\widehat{\mu}(\lambda\omega)\right)(\tau)&\implies &  u(\tau)=\frac{1}{2i\pi}\partial_\tau\mathcal{R}\mu(\omega,\tau)\\
\implies   \frac{1}{2i\pi}\partial_\tau\mathcal{R}\mu(\omega,\tau)&=&\sum_{x\in \text{Crit }\omega} a(x)\delta_{\omega(x)} + r.
\end{eqnarray*}
This immediately proves a precised version of
Theorem \ref{radon} and gives
some informations on the Radon transform
of the measure $\mu$:
\begin{theorem}
Under the assumptions of Theorem
\ref{mainthmprecise}, let $\mathcal{R}\mu$ be the Radon transform
of $\mu$. Then for an everywhere
dense set of $\omega\in\mathbb{S}^2$, 
\begin{eqnarray}
\frac{1}{2i\pi}\partial_\tau\mathcal{R}\mu(\omega,\tau)=\sum_{x\in \text{Crit }\omega} a(x)\delta_{\omega(x)} + r
\end{eqnarray}
where $$\chi(S)=\sum_{x\in \text{Crit }\omega}-\frac{a(x)^2}{\vert a(x)\vert^2}.$$
\end{theorem}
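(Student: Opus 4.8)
The plan is to obtain this statement as a direct corollary of Theorem~\ref{mainthmprecise}, the only extra ingredient being the Fourier slice identity relating the partial Fourier transform of $\mathcal{R}\mu$ in the affine variable with the restriction of $\widehat{\mu}$ to the ray $\mathbb{R}\omega$. So there is essentially no new analytic content: the work consists in bookkeeping the Fourier normalizations.

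First I would record that, since $\mu$ is compactly supported in $\mathbb{R}^3$, the Radon transform $\mathcal{R}\mu(\omega,\cdot)$ is a compactly supported distribution of $\tau\in\mathbb{R}$ (supported in the image of $\mathrm{supp}\,\mu$ by the linear form $\omega$). The Fourier slice theorem, in the form cited from \cite[p.~19]{WF1}, gives $\mathcal{R}\mu(\omega,\tau)=\mathcal{F}^{-1}_\lambda\bigl(\widehat{\mu}(\lambda\omega)\bigr)(\tau)$. Differentiating in $\tau$ commutes with $\mathcal{F}^{-1}_\lambda$ and produces a factor $i\lambda$ under the integral sign, so $\partial_\tau\mathcal{R}\mu(\omega,\tau)=\mathcal{F}^{-1}_\lambda\bigl(i\lambda\,\widehat{\mu}(\lambda\omega)\bigr)(\tau)$. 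Comparing this with the definition (\ref{uoscill}) of $u=\mathcal{F}^{-1}_\lambda\bigl((\tfrac{\lambda}{2\pi})\widehat{\mu}(\lambda\omega)\bigr)$, we get precisely $\partial_\tau\mathcal{R}\mu(\omega,\tau)=2i\pi\,u(\tau)$, that is $\frac{1}{2i\pi}\partial_\tau\mathcal{R}\mu(\omega,\tau)=u(\tau)$.

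Next I would invoke Theorem~\ref{mainthmprecise}: for $\omega$ in the everywhere dense subset of $\mathbb{S}^2$ where the height function $\omega\in C^\infty(S)$ is excellent Morse, $u$ admits the canonical decomposition $u=\sum_{x\in\text{Crit }\omega}a(x)\delta_{\omega(x)}+r$ with all $a(x)\neq 0$, with $r$ an oscillatory integral of symbol degree $-1$, and with $\chi(S)=\sum_{x\in\text{Crit }\omega}-a(x)^2/\vert a(x)\vert^2$. Substituting the identity of the previous paragraph yields the first displayed formula, and substituting into the Euler characteristic identity gives the second. The excellence hypothesis guarantees the critical values $\omega(x)$ are pairwise distinct, so the atoms $a(x)$ are intrinsically attached to $\frac{1}{2i\pi}\partial_\tau\mathcal{R}\mu(\omega,\cdot)$ (they are its point masses), which is the sense in which $\chi(S)$ is recovered.

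The hard part will be nothing more than checking the precise constants and sign conventions in the Fourier slice formula of \cite{WF1}, so that the prefactor $\tfrac{\lambda}{2\pi}$ in the definition of $u$ matches $\frac{1}{2i\pi}\partial_\tau$ exactly; once this is verified, the theorem is a formal consequence of Theorem~\ref{mainthmprecise} and requires no further argument.
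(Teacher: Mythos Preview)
Your proposal is correct and follows exactly the same approach as the paper: the paper also derives the identity $\mathcal{R}\mu(\omega,\tau)=\mathcal{F}^{-1}_\lambda(\widehat{\mu}(\lambda\omega))(\tau)$ from the Fourier slice theorem, differentiates in $\tau$ to obtain $u(\tau)=\frac{1}{2i\pi}\partial_\tau\mathcal{R}\mu(\omega,\tau)$, and then invokes Theorem~\ref{mainthmprecise} verbatim. Your check of the constants matches the paper's conventions, so there is nothing further to add.
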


\subsubsection{Remark.}
If we are given a Lagrangian distribution
$u\in\mathcal{D}^\prime(\mathbb{R})$ 
with singular point 
$t_0$ and
$u$ can be written as a sum
$u=a\delta_{t_0} + 
r$ 
where $a\in\mathbb{C}$ and 
$r$
is a Lagrangian distribution 
with asymptotic symbol
of degree $-1$,
then we can recover 
$a$ by scaling around
the point $t_0$.
Indeed, a straightforward
calculation yields
\begin{eqnarray}
\lim_{\lambda\rightarrow 0}\lambda u(\lambda(.-t_0) )= a\delta_{t_0}
\end{eqnarray}
where the limit
is understood in the sense
of distributions. For any
test function
$\varphi$ which is equal to $1$
in a sufficiently
small neighborhood
of $t_0$, 
we find
that 
$\lim_{\lambda\rightarrow 0}
\left\langle\lambda t(\lambda(.-t_0) ),\varphi\right\rangle=a$.
\subsection{The example of the sphere.}

We show how our proof
works in the case of the
unit sphere
$\mathbb{S}^2$
in $\mathbb{R}^3$.
First, note that 
any height function
restricted
on $\mathbb{S}^2$
is Morse 
excellent and has 
exactly
two critical points.
The Fourier transform
of $\mu$ is given by the exact
formula:
$\widehat{\mu}(\xi)
=4\pi\frac{\sin(\vert\xi\vert)}{\vert\xi\vert}$
therefore
\begin{eqnarray*}
\frac{\lambda}{2\pi}\widehat{\mu}(\lambda\omega)
&=& 2\sin(\lambda)\\
\implies \mathcal{F}^{-1}\left(\frac{\lambda}{2\pi}\widehat{\mu}(\lambda\omega) \right)
&=& \mathcal{F}^{-1}\left(2\sin(\lambda)\right)\\
=  \mathcal{F}^{-1}\left(\frac{e^{i\lambda}-e^{-i\lambda}}{i}\right)
&=& i(\delta_{+1}-\delta_{-1}).
\end{eqnarray*}
Finally, the identity \ref{Morsepol}
allows to recover the
well known result
$\chi(\mathbb{S}^2)=-i^2-(-i)^2=2$.

\section{Plane wave analysis and topology.}
Let us recall the statement of Theorem \ref{thm2}
before we give a proof:

\emph{Let $S$ be a closed
compact
surface embedded in
$\mathbb{R}^3$
and $\mu$ the associated 
surface carried measure.
If 
$\psi\in C^\infty(\mathbb{R}^3)$ 
is such that
its restriction on $S$ 
is Morse excellent then
we can recover
the Euler characteristic of $S$
from the map 
$\lambda\longmapsto\mu\left(e^{i\lambda \psi} \right)$.}

The idea to consider
oscillatory integrals
of the form $\mu\left(e^{i\lambda \psi} \right)$
comes from the
coordinate invariant definition
of wave front set
due to Gabor \cite{Gabor-72,WF2} then corrected 
by Duistermaat \cite[Proposition 1.3.2]{Duistermaat}.
To prove Theorem \ref{thm2},
we just repeat
the proof of Theorem \ref{mainthmprecise}
applied to the
distribution
$u=\mathcal{F}_\lambda^{-1}
\left(\left(\frac{\lambda}{2\pi}\right)
\mu\left(e^{i\lambda \psi} \right)\right)$
where
we replace
$\omega$ by $\psi$.

\section{The wave equation, propagation of singularities and
topology.}
Let us recall the statement of Theorem \ref{thm3}:

\emph{Let $S$ be a closed
compact 
surface embedded in
$\mathbb{R}^3$ and
$\mu$ the associated 
surface carried measure.
Consider the solution 
$u\in\mathcal{D}^\prime(\mathbb{R}^{3+1})$
of the wave equation 
$\left(\partial^2_t-\sum_{i=1}^3\partial_{x^i}^2\right) u=0$ 
with Cauchy data $u(0)=0,\partial_tu(0)=\mu$.
For $x$ in some
open dense subset of $\mathbb{R}^3$,
set $\ell(x)$ to be
the line $\{x\}\times \mathbb{R}\subset \mathbb{R}^{3+1} $,
then: 
\begin{enumerate}
\item the restriction
$u_{\ell(x)}$ is a compactly 
supported distribution of $t$, 
\item we can recover the 
Euler characteristic of
$S$ from the restriction 
$u_{\ell(x)}$.
\end{enumerate}}

Let us prove a precised version of 
claim (1):
\begin{prop}\label{claim1thm3}
Let $S$ be a closed
compact
surface embedded in 
$\mathbb{R}^3$
and $\mu$ the associated 
surface carried measure. 
Consider the solution $u$
of the wave equation $\square u=0$ with Cauchy data $u(0)=0,\partial_tu(0)=\mu$.
For all $x\in\mathbb{R}^3\setminus S$:
\begin{itemize}
\item $t\longmapsto u(t,x)$ is a compactly supported distribution of $t$ 
\item the singular support of $u(.,x)$ is
a subset of 
$\{t\in\mathbb{R}| \exists y\in S \text{ s.t. } \vert x-y\vert=\vert t\vert, y-x \perp T_{y}S \}$.
\end{itemize}
\end{prop}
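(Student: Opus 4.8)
The plan is to realize $u$ as a convolution with the wave propagator and then read off both bullets from the \emph{support} and the \emph{wave front set} of that propagator. Let $E\in\mathcal{D}'(\mathbb{R}^{3+1})$ be the fundamental solution of $\square$ that solves the Cauchy problem, i.e. the kernel of $t\mapsto\frac{\sin(t\sqrt{-\Delta})}{\sqrt{-\Delta}}$ extended oddly in $t$; then $u=E*(\delta_0(t)\otimes\mu)$, which is a legitimate convolution because $\delta_0(t)\otimes\mu$ is compactly supported. The two classical inputs about $E$ in spatial dimension $3$ are: (i) the sharp Huygens principle, $\operatorname{supp}E\subseteq\{(t,z):|t|=|z|\}$, the \emph{full} light cone and not the solid cone; and (ii) $E$ is a Lagrangian distribution whose singularities lie on this cone and whose wave front set is carried by the (null) conormal directions of the cone together with the characteristic covectors over the origin, so in particular $WF(E)\subseteq\operatorname{Char}(\square)=\{\tau^2=|\xi|^2\}$.

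For the first bullet I would argue purely from supports. The convolution support estimate gives $\operatorname{supp}u\subseteq\operatorname{supp}E+(\{0\}\times S)=\{(t,x):\exists\,y\in S,\ |t|=|x-y|\}$. Intersecting with the line $\ell(x)=\{x\}\times\mathbb{R}$ and using that $S$ is compact and $x\notin S$, so that $0<d(x,S)\le|x-y|\le\max_{y\in S}|x-y|<\infty$, we see that $u_{\ell(x)}$ is supported in the compact set $\{t:\ d(x,S)\le|t|\le\max_{y\in S}|x-y|\}$, which in particular avoids $t=0$. That $u_{\ell(x)}$ is a well defined distribution follows, as explained next, from $WF(u)\cap N^*\ell(x)=\emptyset$.

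For the second bullet I would use propagation of singularities. First, $\square u=0$ forces $WF(u)\subseteq\operatorname{Char}(\square)$; since the conormal $N^*\ell(x)$ consists of covectors $(t,x;\tau,\xi)$ with $\tau=0,\ \xi\neq0$ and these are never characteristic, $WF(u)\cap N^*\ell(x)=\emptyset$, so the pullback $u_{\ell(x)}$ exists and $WF(u_{\ell(x)})\subseteq\{(t;\tau):\exists\,\xi,\ (t,x;\tau,\xi)\in WF(u)\}$. Next, $WF(u)$ is invariant under the null bicharacteristic flow of $\square$ and its singularities emanate from the Cauchy data on $\{t=0\}$, hence $WF(u)$ is contained in the union of the maximally extended null bicharacteristics of $\square$ passing over $\{0\}\times S$ with spatial covector conormal to $S$. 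From the tensor product rule and $WF(\mu)=N^*S\setminus 0$, and since $S$ is a hypersurface with unit normal $\nu(y)$, one has $WF(\delta_0(t)\otimes\mu)\subseteq\{(0,y;\tau,\xi):y\in S,\ \xi\in\mathbb{R}\nu(y)\}$; intersecting with $\operatorname{Char}(\square)$ forces $\xi=c\,\nu(y)$ and $\tau=\pm c\neq0$. The null bicharacteristic of $\square=\partial_t^2-\Delta$ through such a point projects to the straight null line $t\mapsto(t,\ y-\epsilon t\,\nu(y))$, $\epsilon=\pm1$, which meets $\ell(x)$ only when $x-y$ is a multiple of $\nu(y)$, i.e. $y-x\perp T_yS$, and then precisely at times $t$ with $|t|=|x-y|$. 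Therefore the singular support of $u_{\ell(x)}$ is contained in $\{t\in\mathbb{R}:\exists\,y\in S,\ |x-y|=|t|,\ y-x\perp T_yS\}$, which is the claim.

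The main obstacle is the careful bookkeeping of wave front sets through the three operations — forming $\delta_0(t)\otimes\mu$, applying the propagator (equivalently, invoking propagation of singularities for the Cauchy problem), and restricting to $\ell(x)$ — together with using the \emph{sharp} Huygens statement correctly; everything else reduces to elementary geometry of spheres tangent to $S$. One can in fact see the geometric content by hand: for generic $t$ the sphere $\{|y-x|=|t|\}$ is transverse to $S$, and up to the factor $\frac{\operatorname{sgn}t}{4\pi|t|}$ the value $u(t,x)$ is a smooth function of $t$ obtained by integrating the area density of $\mu$ over the smoothly varying curve $S\cap\{|y-x|=|t|\}$; transversality, hence this smooth dependence, can only fail at a $y$ with $T_yS=(y-x)^\perp$, i.e. $y-x\perp T_yS$.
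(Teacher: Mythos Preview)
Your argument is correct. The overall architecture coincides with the paper's: you both show that $WF(u)\subset\operatorname{Char}(\square)$ misses $N^*\ell(x)$, hence the restriction exists, and you both deduce compact support from (strong) Huygens/finite speed together with $x\notin S$. The genuine difference is in how you locate the singular support. The paper plugs in the Kirchhoff representation $u(t,x)=\frac{1}{4\pi t}\int\delta(|t|-|x-y|)\,d\mu(y)$ and computes $WF(i^*u)$ by the wave front calculus for products and pushforwards applied to $\delta(|t|-|x-y|)\mu(y)$, reading off directly that a singular $t$ must satisfy $|t|=|x-y|$ with $x-y\in N_yS$. You instead invoke Duistermaat--H\"ormander propagation of singularities: $WF(u)$ is a union of null bicharacteristics, and the restriction to $t=0$ is governed by $WF(\delta_0\otimes\mu)\cap\operatorname{Char}(\square)$, which forces the covector to be $(\pm c,c\,\nu(y))$; following the straight null ray $t\mapsto(t,y\mp t\,\nu(y))$ to $\ell(x)$ gives the same geometric condition. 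Your route is more conceptual and generalizes immediately to variable-coefficient hyperbolic operators (where no closed Kirchhoff formula is available), while the paper's route is more self-contained and makes the later step---rewriting $u_{\ell(x)}$ as an oscillatory integral in $\lambda$ via $\int e^{-i\lambda|x-y|}\,d\mu(y)$---fall out of the same formula with no extra work. Your final ``by hand'' paragraph is essentially a sketch of the paper's computation.
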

\begin{proof}

Recall that $\ell_x=\{(t,x)| t\in\mathbb{R}^3   \} $ and denote
by $N^*(\ell_x)\subset T^*\mathbb{R}^{3+1}$ its
conormal bundle. We want to prove that one can restrict
the distribution $u$ on $\ell_x$. 
First, $\square u=0$ implies that
the wave front set of $u$ 
lies in the 
characteristic set of $\square$ (\cite[Theorem 8.3.1]{HormanderI}):
\begin{eqnarray*}
WF(u)\subset \text{Char }\square= \{\tau^2-\vert\xi\vert^2\} \implies 
WF(u)\cap N^*(\ell_x)=\emptyset
\end{eqnarray*}
which means that one can pull--back the distribution
$u$ by the embedding $i:\ell_x\hookrightarrow \mathbb{R}^{3+1}$ 
(see \cite[Theorem 8.2.4]{HormanderI}). The restriction
$i^*u$ is thus well defined, it is
compactly since the Cauchy data $(0,\mu)$
is compactly supported
and by finite propagation 
speed property for the wave equation.

Secondly, we calculate the wave front set
of the restriction $i^*u$. Elements of the
cotangent space $T^*\mathbb{R}^{3+1}$
are denoted by $(t,x;\tau,\xi)$
and $\pi$ is the projection 
$T^*\mathbb{R}^{3+1}\mapsto \mathbb{R}^{3+1}$.
Since $u$ is solution of 
$\square u=0$ with Cauchy data $(0,\mu)$, 
$u$ is given by the representation
formula (\cite[equation (2.21) p.~12]{BersJohnSchechter},
\cite[Theorem 5.3 p.~ 67]{AlinhacHyperbolicPDE},
\cite[p.~3]{SoggeNonlinearWave})
\begin{eqnarray}\label{intformula}
u(t,x)=\frac{1}{4\pi t}\int_{\mathbb{R}^3} \delta(\vert t\vert-\vert x-y\vert)\mu(y)dy. 
\end{eqnarray}
We want to calculate
$WF(u)$ using the integral
formula (\ref{intformula}).
By finite propagation 
speed,
the condition $x\in\mathbb{R}^3\setminus S$
ensures that $i^*u=0$ in some neighborhood
of $t=0$ which means we do not have
to consider the contribution of $t=0$
to $WF(\delta(\vert t\vert-\vert x-y\vert))$.
Denote by $P$ the projection $P:(t,y)\in
\mathbb{R}^{3+1}\mapsto t\in \mathbb{R}$,
since
$WF(\delta(\vert t\vert-\vert x\vert))=\{(t,x;\lambda t,-\lambda x) 
| \vert t\vert=\vert x\vert,
\lambda\in\mathbb{R}\setminus \{0\} \}
\cup T_0^*\mathbb{R}^{3+1}$,
the calculus of wave front set
yields:
\begin{eqnarray*}
WF(u)&\subset & P_*WF(\delta(\vert t\vert-\vert x-y\vert)\mu(y))\\
&\subset & \{(t;\lambda t)| \exists (y;\eta)\in N^*(S), \vert t\vert=\vert x-y\vert, \lambda(x-y)=\eta \}\\
&=&\{(\pm \vert x-y\vert;\tau) | (x-y) \perp T_yS ,\tau\in\mathbb{R}\setminus \{0\} \}\\
\implies  \text{ss }u=\pi(WF(i^*u))&\subset & \{ \pm\vert x-y\vert | (x-y)\perp T_yS  \}.
\end{eqnarray*}
\end{proof}

Let us prove that one can recover
$\chi(S)$ from $u(.,x)\in\mathcal{D}^\prime(\mathbb{R})$
concluding the proof
of Theorem \ref{thm3}.

\begin{theorem}\label{thm3precise}
Let $S$ be a closed
compact 
surface embedded in
$\mathbb{R}^3$ and
$\mu$ the associated 
surface carried measure.
Consider the solution 
$u\in\mathcal{D}^\prime(\mathbb{R}^{3+1})$
of the wave equation 
$\left(\partial^2_t-\sum_{i=1}^3\partial_{x^i}^2\right) u=0$ 
with Cauchy data $u(0)=0,\partial_tu(0)=\mu$.
Then for $x$ in some
open dense subset of $\mathbb{R}^3$, we
have the canonical decomposition:
\begin{equation}
-2i\left(t\partial_t+1 \right)u(t,x)
=\sum_{y\in \text{Crit }L_x} a(y)\delta_{\vert
y-x\vert}(t) + r(t)
\end{equation}
where $L_x:y\in S\mapsto \vert y-x\vert$ is Morse excellent,
$r$ is a finite
sum of oscillatory integrals with symbol
of degree $-1$.
\begin{equation}
\chi(S)=\sum_{y\in \text{Crit }L_x} -\frac{a(y)^2}{\vert a(y)\vert^2}.
\end{equation}
\end{theorem}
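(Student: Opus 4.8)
The plan is to reduce Theorem~\ref{thm3precise} to the machinery already developed for Theorem~\ref{mainthmprecise}, by identifying the restriction $u(\cdot,x)$ with an inverse Fourier transform of a stationary-phase expansion whose phase is the distance function $L_x(y)=|y-x|$. First I would start from the representation formula (\ref{intformula}) and rewrite
\begin{equation*}
4\pi t\, u(t,x)=\int_{\mathbb{R}^3}\delta(|t|-|x-y|)\,\mu(y)\,dy=\int_S \delta(|t|-L_x(y))\,d\sigma(y),
\end{equation*}
so that, after taking Fourier transform in $t$ and using that $\widehat{\delta(|t|-a)}(\lambda)$ is $2\cos(\lambda a)$ up to the usual constants, the quantity $t\,u(t,x)$ has Fourier transform (in $t$) essentially equal to $\int_S e^{i\lambda L_x(y)}d\sigma(y)$ plus its complex conjugate. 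The differential operator $-2i(t\partial_t+1)$ applied to $u$ is precisely the operator that, under Fourier transform in $t$, implements multiplication by $\frac{\lambda}{2\pi}$ (up to constants) together with the symmetrization that kills the $e^{-i\lambda L_x}$ branch or combines the two branches into a single clean expansion; I would verify this by a direct computation of $\mathcal{F}_\lambda\big(-2i(t\partial_t+1)u(t,x)\big)$ and match it with $\frac{\lambda}{2\pi}\int_S e^{i\lambda L_x(y)}d\sigma(y)$.

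Next I would invoke the genericity statement: for $x$ outside $S$, the distance function $L_x\colon S\to\mathbb{R}$ is smooth, and by a standard transversality / Sard-type argument (the same kind used for Lemma~\ref{regvalGaussmorse2}, applied to the family $x\mapsto L_x$ rather than $\omega\mapsto\omega$) there is an open dense set of $x\in\mathbb{R}^3$ for which $L_x$ is Morse excellent. The critical points of $L_x$ are exactly the $y\in S$ with $y-x\perp T_yS$, and nondegeneracy of the Hessian is the condition that the focal radius $|y-x|$ is not a principal radius of curvature at $y$ — again a generic condition on $x$. On that open dense set, the stationary phase expansion (\ref{statphasemu}), now with phase $L_x$ in place of $\omega$, gives
\begin{equation*}
\frac{\lambda}{2\pi}\int_S e^{i\lambda L_x(y)}d\sigma(y)\sim\sum_{y\in\text{Crit }L_x}e^{i\lambda L_x(y)}\,b(y,\lambda),
\end{equation*}
with $b(y,\lambda)$ polyhomogeneous of degree $0$ and leading coefficient $a(y)=\dfrac{e^{i\frac{\pi}{4}(n_+-n_-)(y)}}{\sqrt{|\det L_x''(y)|}}\neq 0$.

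Then I would apply $\mathcal{F}_\lambda^{-1}$ exactly as in the proof of Proposition~\ref{Lagdistrib} and Theorem~\ref{mainthmprecise}: the inverse transform of $\sum_y e^{i\lambda L_x(y)}b(y,\lambda)$ is a finite sum of oscillatory integrals, each singular only at $t=|y-x|$, the degree-$0$ part contributing $a(y)\delta_{|y-x|}(t)$ and the remaining degree-$\le -1$ part assembling into $r(t)$; excellency of $L_x$ guarantees the critical values $|y-x|$ are distinct so the decomposition is canonical. Finally, the sign computation from Theorem~\ref{mainthmprecise} — namely $-a(y)^2/|a(y)|^2=(-1)^{n_-(y)}$, where $n_-(y)$ is the Morse index of $L_x$ at $y$ — together with the Morse inequalities $\chi(S)=\mathcal{M}_{L_x}(-1)$ yields $\chi(S)=\sum_{y\in\text{Crit }L_x}-a(y)^2/|a(y)|^2$. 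The main obstacle I anticipate is the bookkeeping around the operator $-2i(t\partial_t+1)$ and the $1/(4\pi t)$ prefactor in (\ref{intformula}): one must check carefully that this operator precisely reproduces the $\frac{\lambda}{2\pi}$-weighting used in the earlier theorems and that the contribution near $t=0$ genuinely drops out (which it does, by finite propagation speed since $x\notin S$), so that no spurious singular contribution or wrong constant creeps in; everything else is a faithful transcription of the $\omega$-case with $\omega$ replaced by $L_x$.
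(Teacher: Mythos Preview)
Your proposal is correct and follows essentially the same route as the paper: start from the Kirchhoff representation formula, verify that $-2i(t\partial_t+1)$ applied to $u(\cdot,x)$ produces exactly $\mathcal{F}_\lambda^{-1}\big(\tfrac{\lambda}{2\pi}\int_S e^{-i\lambda L_x(y)}d\sigma(y)\big)$ (the paper simply restricts to $t\ge 0$ rather than juggling the two cosine branches, which cleanly resolves the bookkeeping issue you flag), invoke the genericity lemma that $L_x$ is Morse excellent for $x$ in an open dense set (this is Lemma~\ref{morseexcdist} in the appendix), and then transcribe the proof of Theorem~\ref{mainthmprecise} with $L_x$ in place of $-\omega$.
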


\begin{proof}
We again use the representation formula
for $u$:
\begin{eqnarray*}
u(t,x)=
\frac{1}{4\pi t}\int_{\mathbb{R}^3}  
\delta(\vert t\vert-\vert x-y\vert)\mu(y)dy.
\end{eqnarray*}
Recall that $x\in \mathbb{R}^3\setminus S$
implies $u(.,x)=0$ in some neighborhood 
of $t=0$.
Therefore, for $t\geqslant 0$:
\begin{eqnarray*}
u(t,x)&=&\frac{1}{8\pi^2 t} \int_{\mathbb{R}} d\lambda e^{it\lambda} 
\int_{\mathbb{R}^3} \mu(y)e^{-i\lambda\vert x-y\vert}dy \\
\implies
-2i\left(t\partial_t+1 \right)u(t,x) &= & \mathcal{F}^{-1}\left(\frac{\lambda}{2\pi}
\int_{\mathbb{R}^3} \mu(y)e^{-i\lambda\vert x-y\vert}dy
\right).
\end{eqnarray*}

By Lemma \ref{morseexcdist}, for $x$
in some open 
dense set in $\mathbb{R}^3$, 
the function $L_x:y\in S\mapsto \vert y-x\vert$
is 
Morse
excellent.
Therefore, repeating
the proof
of Theorem \ref{mainthm}
with $L_x$ instead of $-\omega$,
we find that
$$-2i\left(t\partial_t+1 \right)u(t,x)
=\sum_{y\in \text{Crit }L_x} a(y)\delta_{\vert
y-x\vert}(t) + r(t)$$
where $r$ is a finite
sum of oscillatory integrals with symbol
of degree $-1$.
Finally $\chi(S)=\sum_{y\in \text{Crit }L_x} -\frac{a(y)^2}{\vert a(y)\vert^2}$.
\end{proof}

\section{Appendix.}
In this section,
we gather several
important results
in Morse theory.

\subsection{Almost all height functions are Morse.}
Let $S$ be an 
embedded surface in 
$\mathbb{R}^3$.
For every $x\in S$,
we denote by $T_xS$ the 
tangent plane to $S$ at $x$.
\begin{defi}
The map 
$x\in S \mapsto n(x)\in\mathbb{S}^2$ where 
$n(x)$ is the
oriented unit normal
vector
to $T_xS$ 
is called the Gauss map. 
It induces canonically
a \emph{projective} 
Gauss map
denoted by 
$[n]:=x\in S\longmapsto [n](x)
\in\mathbb{RP}^2$.
\end{defi}
The next theorem \cite[Thm 11.2.2 p.~94]{DubrovinFomenkoNovikovII}
characterizes all height functions
which are Morse functions 
in terms of the Gauss map:
\begin{theorem}\label{regvalGaussmorse}
Let $S$ be an 
embedded surface in 
$\mathbb{R}^3$. 
The height
function $\xi\in C^\infty(S):=x\in S\mapsto \xi(x)$ 
is a Morse function precisely
when 
$[\xi]\in\mathbb{RP}^2$
is a regular value of the projective Gauss map $[n]$.
It follows
that for an \emph{open everywhere dense set} 
of $[\xi]\in\mathbb{RP}^2$, 
the height
function $\xi\in C^\infty(S)$ 
is a Morse function.
\end{theorem}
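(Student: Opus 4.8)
The plan is to reduce Theorem \ref{regvalGaussmorse} to a clean geometric criterion and then invoke Sard's theorem. First I would unwind what it means for a height function to be Morse. Fix a covector $\xi \in \mathbb{R}^{3*}$, which we regard as a linear function on $\mathbb{R}^3$ and then restrict to $S$. A point $x \in S$ is critical for $\xi|_S$ precisely when $d(\xi|_S)(x) = 0$, i.e.\ when $\xi$ annihilates $T_xS$; equivalently $[\xi] = [n](x)$ in $\mathbb{RP}^2$, where $n(x)$ is the unit normal. So the critical set of $\xi|_S$ is exactly the fiber $[n]^{-1}([\xi])$. This already explains why the relevant object is the projective Gauss map: the critical points of the height function in direction $[\xi]$ are the preimages of $[\xi]$ under $[n]$.

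Next I would analyze the nondegeneracy of such a critical point. At a critical point $x$, choosing local coordinates on $S$ near $x$ and a unit normal, the Hessian of $\xi|_S$ at $x$ is (up to the nonzero scalar $|\xi|$) the second fundamental form of $S$ at $x$ contracted against $n(x) = \pm\xi/|\xi|$. The key computational fact I would record is that the differential of the Gauss map $n$ at $x$ is, via the identification $T_xS \cong T_{n(x)}\mathbb{S}^2$, exactly the shape operator (Weingarten map), whose associated bilinear form is the second fundamental form. Hence the Hessian of $\xi|_S$ at a critical point $x$ is nondegenerate if and only if $dn(x)$ (equivalently $d[n](x)$, since passing to $\mathbb{RP}^2$ is a local diffeomorphism away from nothing — the quotient $\mathbb{S}^2 \to \mathbb{RP}^2$ is a local diffeomorphism) is an isomorphism $T_xS \to T_{[\xi]}\mathbb{RP}^2$. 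This is precisely the statement that $x$ is a regular point of $[n]$. Therefore $\xi|_S$ is Morse $\iff$ every point of $[n]^{-1}([\xi])$ is a regular point of $[n]$ $\iff$ $[\xi]$ is a regular value of $[n]$, which is the first assertion.

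For the second assertion I would apply Sard's theorem: the projective Gauss map $[n]: S \to \mathbb{RP}^2$ is smooth ($C^\infty$, in fact real-analytic if $S$ is), so its set of critical values has measure zero in $\mathbb{RP}^2$, hence the set of regular values is dense. To upgrade ``dense'' to ``open dense,'' I would use that $S$ is compact: the critical set $\{x \in S : d[n](x) \text{ not surjective}\}$ is closed (it is the vanishing locus of the Jacobian determinant, a continuous function), hence compact, so its image under the continuous map $[n]$ is compact, hence closed; its complement — the set of regular values — is therefore open, and it is dense by Sard. This gives the open everywhere dense set of $[\xi] \in \mathbb{RP}^2$ for which $\xi|_S$ is Morse.

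I expect the main technical point to be the identification of the Hessian of the height function with the derivative of the Gauss map (equivalently, with the second fundamental form/shape operator), including getting the base-point identifications $T_xS \cong T_{n(x)}\mathbb{S}^2$ and $T_{n(x)}\mathbb{S}^2 \cong T_{[\xi]}\mathbb{RP}^2$ right; this is classical differential geometry (it is essentially the content of \cite[Thm 11.2.2 p.~94]{DubrovinFomenkoNovikovII} and \cite[3.3.1 p.~67]{NovikovTaimanov}), so I would cite it rather than rederive it in coordinates. The Sard and compactness arguments for the ``open dense'' conclusion are then routine.
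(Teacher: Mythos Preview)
Your argument is correct and is the standard proof of this classical fact. Note, however, that the paper does not actually prove Theorem~\ref{regvalGaussmorse}: it is stated in the appendix with a direct citation to \cite[Thm 11.2.2 p.~94]{DubrovinFomenkoNovikovII} and no further argument. So there is nothing to compare against beyond the reference itself; your sketch simply supplies what the cited source contains.

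One small remark worth making explicit: the theorem as stated in the paper says only ``embedded surface,'' not ``compact embedded surface,'' yet your openness argument (image of the closed critical locus is closed) genuinely requires compactness of $S$. This is harmless in context, since compactness of $S$ is a standing assumption throughout the paper, but you should flag it when you invoke it rather than silently import it. Without compactness the regular-value set is still dense by Sard but need not be open.
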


\subsection{Almost all Morse
height functions are excellent Morse functions.}
We
refine Theorem \ref{regvalGaussmorse} and show that
for generic $\xi\in\mathbb{RP}^2$,
the height function
$\xi$ is an excellent Morse function
i.e. all critical values
of $\xi$ are distinct.
\begin{lemm}\label{regvalGaussmorse2}
Let $S$ be a compact
embedded
surface in 
$\mathbb{R}^3$,
for an \emph{open everywhere dense set} 
of $[\xi]\in\mathbb{RP}^2$, 
the height
function 
$\xi\in C^\infty(S)$ 
is an excellent Morse function.
\end{lemm}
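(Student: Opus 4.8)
The plan is to start from the open dense set of Morse directions furnished by Theorem \ref{regvalGaussmorse} and to carve out, inside it, the excellent ones by a Sard–type argument applied to the pairwise differences of critical values. It is convenient to work on $\mathbb{S}^2$ rather than $\mathbb{RP}^2$: the height functions $\xi$ and $-\xi$ (and, more generally, $t\xi$ for $t>0$) have the same critical points, and their critical values differ only by the factor $\pm t$, so ``excellent Morse'' is invariant under $\omega\mapsto-\omega$ and under positive rescaling; hence any open dense $\mathbb{Z}/2$–invariant subset of $\mathbb{S}^2$ descends to an open dense subset of $\mathbb{RP}^2$. Let $M\subset\mathbb{S}^2$ be the open dense set of $\omega$ for which $\omega|_S$ is Morse, and fix $\omega_0\in M$. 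Since $S$ is compact and the critical points of $\omega_0|_S$ are nondegenerate, hence isolated, there are finitely many of them, say $x_1,\dots,x_k$.

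First I would produce a local parametrization of the critical points. A point $x\in S$ is critical for $\omega|_S$ exactly when the unit normal $n(x)$ is collinear with $\omega$, i.e. $n(x)=\pm\omega$, and nondegeneracy of such a critical point is equivalent to $dn_x$ being invertible (its determinant being, up to sign, the Gauss curvature $K(x)=\det\omega''(x)$, as already recalled in Section 1). Because $\omega_0\in M$, the Gauss map $n$ is a local diffeomorphism near each $x_i$, so the implicit function theorem gives a connected neighbourhood $V\subset\mathbb{S}^2$ of $\omega_0$ and smooth maps $x_i\colon V\to S$ with $x_i(\omega_0)$ equal to the $i$-th critical point; a standard compactness argument shows that, after shrinking $V$, the set $\{x_1(\omega),\dots,x_k(\omega)\}$ is \emph{exactly} the set of critical points of $\omega|_S$ for every $\omega\in V$, all nondegenerate. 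In particular $V\subset M$ and the number of critical points is locally constant near $\omega_0$.

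Next I would compute the critical values and their differentials. Put $c_i(\omega)=\omega\bigl(x_i(\omega)\bigr)=\langle\omega,x_i(\omega)\rangle$ for $\omega\in V$. Differentiating along $\dot\omega\in T_\omega\mathbb{S}^2$ and using that $\tfrac{d}{dt}x_i(\omega)\in T_{x_i(\omega)}S$ while $\omega\perp T_{x_i(\omega)}S$ (because $x_i(\omega)$ is critical), the cross term vanishes and $dc_i(\omega)\cdot\dot\omega=\langle\dot\omega,x_i(\omega)\rangle$; that is, $dc_i(\omega)$ is the orthogonal projection of $x_i(\omega)$ onto $T_\omega\mathbb{S}^2=\omega^{\perp}$. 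Now fix $i\neq j$ and set $f_{ij}=c_i-c_j$ on $V$. At any point $\omega$ of the bad locus $B_{ij}:=f_{ij}^{-1}(0)$ we have $\langle\omega,x_i(\omega)-x_j(\omega)\rangle=0$, so the vector $x_i(\omega)-x_j(\omega)$ already lies in $\omega^{\perp}$ and equals its own tangential projection; since $x_i(\omega)\neq x_j(\omega)$ (distinct critical points) this gives $df_{ij}(\omega)=x_i(\omega)-x_j(\omega)\neq0$. Hence $0$ is a regular value of $f_{ij}$, so $B_{ij}$ is a $1$–dimensional submanifold of $V$, closed in $V$ and nowhere dense.

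Finally, $V\setminus\bigcup_{i<j}B_{ij}$ is open and dense in $V$ (finite union of nowhere dense closed sets), and every $\omega$ in it is, by construction, an excellent Morse direction. Letting $\omega_0$ range over the dense set $M$ shows that the set $E\subset\mathbb{S}^2$ of excellent Morse directions is dense; and $E$ is open, because near a point $\omega_0\in E$ the critical points are the $x_i(\omega)$ and the finitely many critical values $c_i(\omega)$ vary continuously, hence stay pairwise distinct on a neighbourhood. Since $E$ is $\mathbb{Z}/2$–invariant it descends to the desired open everywhere dense subset of $\mathbb{RP}^2$, proving the lemma. The one place that needs care is precisely the regularity of the value $0$ for $f_{ij}$: the naive worry is that $x_i(\omega)-x_j(\omega)$ might be radial (collinear with $\omega$), so that its tangential projection vanishes; but this possibility is excluded exactly on $B_{ij}$, where $f_{ij}(\omega)=\langle\omega,x_i(\omega)-x_j(\omega)\rangle=0$ forces that vector to be tangent — so the differential must be evaluated only on the bad locus, and there the argument goes through cleanly.
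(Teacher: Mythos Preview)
Your proof is correct and follows essentially the same route as the paper's: locally parametrize the critical points by the direction via the (projective) Gauss map, show that the pairwise difference of critical values has nonzero differential on its zero set because the two critical points are distinct, and conclude that the non-excellent locus is a finite union of codimension-one submanifolds, hence nowhere dense. Your version is, if anything, more careful than the paper's about why the cross term $\langle\omega,\tfrac{d}{dt}x_i(\omega)\rangle$ vanishes and about restricting the differential to $T_\omega\mathbb{S}^2$ (the potential radiality of $x_i-x_j$), points the paper leaves implicit.
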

\begin{proof}
Let $V$ be the set of regular values
of the projective Gauss map $[n]$
in $\mathbb{RP}^2$. By Sard's Theorem
$V$ is open dense in $\mathbb{RP}^2$
and
\begin{eqnarray*}
[\xi]\in V\Leftrightarrow \text{ the height function }\xi \text{ is Morse}.
\end{eqnarray*}

We give next a characterization
of Morse height functions which
are not Morse excellent.
For all $[\xi]\in V$, there is a neighborhood
$\Omega$ of $[\xi]$ such that 
the preimage $[n]^{-1}(\Omega)$ 
is a disjoint union of 
open sets
$(U_1,\cdots,U_k)\subset S^k$ and
each $U_i$ is sent diffeomorphically
to $\Omega$ by the Gauss map.
Therefore there is a collection of maps
\begin{eqnarray*}
(x_1,\cdots,x_k):
=[\xi]\in\Omega\subset \mathbb{RP}^2\longmapsto (x_1([\xi]),\cdots,x_k([\xi]))\in S^k
\end{eqnarray*}
such that $\forall i, [n](x_i([\xi]))=[\xi]$.
We claim that for $[\xi]\in\Omega$:
\begin{eqnarray*}
\xi \text{ is not Morse excellent }
&\Leftrightarrow & \xi.\left(x_i([\xi])-x_j([\xi])\right)=0 \text{ for some }1\leq i<j\leq k.
\end{eqnarray*}
From the fact that
$$d_\xi\xi.\left(x_i([\xi])-x_j([\xi])\right)=\left\langle \left(x_i([\xi])-x_j([\xi])\right) , . \right\rangle 
\neq 0 $$
we deduce that the set of $[\xi]$ such that
the height function
$\xi$ is not Morse excellent 
is a finite union of submanifolds in $\Omega$.
It has thus \textbf{empty interior} 
which proves that Morse
excellent $\xi$ are open dense.
\end{proof}

\subsection{The distance function to almost every 
point
is Morse.}
The height function
is not the only
way to produce
Morse functions on
$S$. Recall we considered
the distance function to $x$,
$L_x:=y\in S\longmapsto 
\vert x-y \vert$.
The set of points $x\in\mathbb{R}^3$ 
where 
$L_x^2\in C^\infty(S)$ 
fails to be a Morse
function is called
the set of \emph{focal points}. 
By the result 
in \cite[Section 11.3 p.~95]{DubrovinFomenkoNovikovII}
and \cite[Corollary 6.2 p.~33]{Milnor-Morse}, the set
of \emph{focal
points} has 
null measure in
$\mathbb{R}^3$.
If 
$\psi\geqslant 0$ 
is a non negative 
Morse function
on $S$ with 
\textbf{only positive
critical values} then
\begin{eqnarray*}
d_x\psi(c)=0\implies d^2_x\sqrt{\psi}(c)=
\frac{d^2_x\psi(c)}{2\sqrt{\psi}} \neq 0. 
\end{eqnarray*}
This shows that 
if 
$x\notin S$ then
$L_x^2$
is a Morse function iff
$L_x$
is Morse. Thus:
\begin{prop}
For an \emph{open everywhere dense set}
of points $x\in\mathbb{R}^3$,
the distance
function 
\begin{eqnarray*}
L_x:=y\in S\longmapsto \vert x-y\vert
\end{eqnarray*}
is Morse.
\end{prop}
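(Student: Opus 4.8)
The plan is to promote the two facts just assembled --- that the focal locus has measure zero, and that for $x\notin S$ the function $L_x$ is Morse whenever $L_x^2$ is --- into an \emph{openness} statement, since measure zero alone only yields a dense set, not an open dense one.

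First I would exhibit the focal locus as a closed set. Introduce the smooth function $F:\mathbb{R}^3\times S\to\mathbb{R}$, $F(x,y)=\vert x-y\vert^2$, so that $L_x^2=F(x,\cdot)$, and let
\[
Z=\bigl\{(x,y)\in\mathbb{R}^3\times S:\ d_yF(x,y)=0 \ \text{ and }\ \det\mathrm{Hess}_y\,F(x,y)=0\bigr\},
\]
the determinant being well defined because the second condition is imposed only at critical points of $F(x,\cdot)$. Working in a finite atlas of the compact surface $S$ one checks that $Z$ is closed in $\mathbb{R}^3\times S$. A point $x$ is a focal point exactly when $L_x^2$ has a degenerate critical point, i.e.\ when $x\in\pi_1(Z)$, where $\pi_1$ denotes the projection onto the $\mathbb{R}^3$ factor. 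Since $S$ is compact the map $\pi_1$ is proper, hence closed, so $\pi_1(Z)$ is closed in $\mathbb{R}^3$. Consequently
\[
U_1:=\mathbb{R}^3\setminus\pi_1(Z)=\{x\in\mathbb{R}^3:\ L_x^2 \text{ is Morse on }S\}
\]
is \emph{open}; and it is dense because $\pi_1(Z)$ has null measure by the cited results \cite[Section 11.3 p.~95]{DubrovinFomenkoNovikovII}, \cite[Corollary 6.2 p.~33]{Milnor-Morse}.

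Next I would set $U:=U_1\cap(\mathbb{R}^3\setminus S)$. The second factor is open (as $S$ is closed) and dense (as $S$ is a null set), so $U$ is open and everywhere dense. Finally, fix $x\in U$. Then $\psi:=L_x^2$ is a Morse function on $S$, and since $x\notin S$ and $S$ is compact, $\psi$ is bounded below by a positive constant, so all its critical values are positive. The computation recalled just above,
\[
d\psi(c)=0\ \Longrightarrow\ d^2\sqrt{\psi}(c)=\frac{d^2\psi(c)}{2\sqrt{\psi(c)}}\neq 0,
\]
together with the identity $d(L_x^2)=2L_x\,dL_x$ --- which, as $L_x>0$ on $S$, shows that $L_x$ and $\psi=L_x^2$ have the same critical points --- shows that $L_x=\sqrt{\psi}$ is Morse on $S$. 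Thus $L_x$ is Morse for every $x$ in the open dense set $U$, which is the assertion.

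The only genuinely delicate point is the openness of $U_1$: the measure-zero statement gives density for free, but one must rule out critical points of $L_x^2$ colliding into a degenerate one as $x$ moves. This is precisely what the compactness of $S$ secures, via the properness of $\pi_1$, by turning the closed condition defining $Z$ into a closed subset of $\mathbb{R}^3$.
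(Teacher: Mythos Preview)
Your proof is correct and rests on the same two ingredients the paper uses in the discussion preceding the proposition: the cited fact that the focal locus of $S$ has null measure, and the observation that for $x\notin S$ the functions $L_x$ and $L_x^{2}$ are simultaneously Morse. Where you go beyond the paper's terse ``Thus:'' is in explicitly securing \emph{openness}: you realize the focal locus as the image $\pi_1(Z)$ of a closed set under a proper map (properness coming from compactness of $S$), hence closed, whereas the paper's text only justifies density via the null-measure statement and leaves openness implicit. This is a genuine tightening of the argument rather than a different route.
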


\subsection{The distance function to almost every 
point
is Morse excellent.}
The next Lemma is needed for the proof of Theorem
\ref{thm3}.
\begin{lemm}\label{morseexcdist}
Let $S$ be a closed compact
embedded surface 
in 
$\mathbb{R}^3$. 
For generic $x\in\mathbb{R}^3$, 
the 
function $L_x\in C^\infty(S)$ 
is an excellent Morse function.
\end{lemm}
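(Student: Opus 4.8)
The plan is to follow the same scheme as the proof of Lemma \ref{regvalGaussmorse2}. By the preceding Proposition, the set $U\subset\mathbb{R}^3$ of points $x$ for which $L_x$ is a Morse function is open and dense, and $U\subset\mathbb{R}^3\setminus S$ (so that all critical values of $L_x$ are strictly positive). Since $S$ is compact, for $x\in U$ the function $L_x$ has only finitely many, nondegenerate, critical points, so the condition ``all critical values of $L_x$ are pairwise distinct'' is \emph{open} in $U$; it therefore suffices to prove that the set of $x\in U$ for which $L_x$ fails to be excellent has empty interior.

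Fix $x_0\in U$. A point $y\in S$ is a critical point of $L_x$ exactly when $x-y\perp T_yS$, and since every critical point of $L_{x_0}$ is nondegenerate, the implicit function theorem furnishes finitely many smooth maps $x\mapsto y_1(x),\dots,y_k(x)\in S$, defined on a neighbourhood $\Omega$ of $x_0$, which for each $x\in\Omega$ enumerate the critical points of $L_x$; write $c_i(x)=L_x(y_i(x))=\vert x-y_i(x)\vert$ for the corresponding critical values. The key step is the computation of $dc_i$: differentiating $c_i(x)=L(x,y_i(x))$ and using that $y_i(x)$ is a critical point of $y\mapsto L(x,y)$, the term involving $dy_i(x)$ drops out, so
\[
dc_i(x)=\frac{x-y_i(x)}{\vert x-y_i(x)\vert}=:\nu_i(x),
\]
the unit normal to $S$ at $y_i(x)$ pointing towards $x$. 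If now $i\neq j$ and some $x\in\Omega$ satisfies simultaneously $c_i(x)=c_j(x)$ and $dc_i(x)=dc_j(x)$, i.e. $\nu_i(x)=\nu_j(x)$, then
\[
y_i(x)=x-c_i(x)\,\nu_i(x)=x-c_j(x)\,\nu_j(x)=y_j(x),
\]
contradicting the distinctness of the critical points $y_i(x)$ and $y_j(x)$. Hence $d(c_i-c_j)$ is nowhere zero on $\{c_i=c_j\}\cap\Omega$, which is therefore a smooth hypersurface (possibly empty) in $\Omega$.

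Consequently the set of $x\in\Omega$ for which $L_x$ is not excellent is $\bigcup_{1\le i<j\le k}\{c_i=c_j\}$, a finite union of hypersurfaces, hence has empty interior; as $x_0\in U$ was arbitrary, the non-excellent locus has empty interior in $U$, so the set of $x$ for which $L_x$ is an excellent Morse function is open and dense in $\mathbb{R}^3$. The one genuinely non-routine point is the derivative computation above together with the observation that $dc_i(x)=dc_j(x)$ forces $y_i(x)=y_j(x)$; everything else is bookkeeping with the implicit function theorem and the compactness of $S$.
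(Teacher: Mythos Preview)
Your proof is correct and follows essentially the same scheme as the paper's: track the critical points $y_i(x)$ smoothly by the implicit function theorem and show that each coincidence locus $\{c_i=c_j\}$ is a hypersurface, hence has empty interior. The only cosmetic difference is that the paper works with $g_x=L_x^2$ and computes $\partial_x\bigl(g_x(y_i(x))-g_x(y_j(x))\bigr)=2\langle\,\cdot\,,y_j(x)-y_i(x)\rangle$, which is visibly nonzero everywhere, whereas you work with $L_x$ itself and obtain $dc_i=\nu_i$, then use the neat observation that $c_i=c_j$ together with $\nu_i=\nu_j$ forces $y_i=y_j$; both routes lead to the same conclusion.
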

\begin{proof}
Note that if $x\notin S$ and $L_x^2$
is Morse 
excellent
then so is $L_x$.
Therefore, it suffices
to prove that
$L_x^2$ is Morse
excellent for generic $x$.
For every 
$(y,x)\in S\times 
\mathbb{R}^3$, let use
denote by $g_x(y)$
the function $L_x(y)^2$.

For given $x_0$ s.t. $g_{x_0}$
is Morse,
we show there is a neighborhood
$U$ of $x_0$ such that
the critical points
of $L_x$ 
depend smoothly on $x\in U$.
Let us call $y_1(x_0),\cdots,y_k(x_0)$
the isolated critical points
of $g_{x_0}$.
For all $l\in\{1,\cdots,k\}$, 
$y_l(x_0)$
is a non degenerate
critical point
of $g_x$ i.e. the Hessian 
$d_y^2g_x$
is invertible. 
Therefore,
we can use 
the implicit
function
theorem 
to express the critical
points
$(y_i)_{1\leq i\leq k}$
of $g_x$ as functions
of $x\in U$
in such a way that
\begin{eqnarray}
\forall x\in U, \forall i\in\{1,\cdots, k \}, d_y g_x(y_i(x))=0.
\end{eqnarray}
Hence 
\begin{eqnarray*}
L_x^2\text{ not Morse excellent }\Leftrightarrow
g_x(y_i(x))=g_x(y_j(x)) \text{ for some } 1\leq i<j\leq k.
\end{eqnarray*}
Observe that the subset 
$\Sigma_{ij}=\{ x\in U | g_x(y_i(x))=g_x(y_j(x))  \}$
is a surface
in $U$ since
$$\partial_x g_x(y_i(x))
-\partial_xg_x(y_j(x))
=2\left\langle . , y_i(x)-y_j(x)\right\rangle\neq 0 ,$$
therefore the set of $x$
such that $L_x^2$ fails to be
Morse excellent has empty interior
in $U$ which proves the claim.
\end{proof}

\bibliographystyle{amsplain}
\bibliography{qed}

\end{document}